\theoremstyle{plain}
\newtheorem{theorem}{Theorem}
\newtheorem{lemma}{Lemma}
\newtheorem{corollary}{Corollary}
\newtheorem{proposition}{Proposition}
\theoremstyle{definition}
\theoremstyle{remark}
\numberwithin{equation}{section} 
\begin{document}
\title[Measures of Noncompactness in $\bar{N}(p,q)$ Summable Sequence Spaces]{Measures of Noncompactness in $\bar{N}(p,q)$ Summable Sequence Spaces} 

\author{Ishfaq Ahmad Malik}
\address{National Institute of Technology, Srinagar\\ Department of Mathematics\\ Srinagar\\ Jammu and Kashmir\\ India}

\email{ishfaq$\_$2phd15@nitsri.net}

\author{Tanweer Jalal}

\address{National Institute of Technology, Srinagar\\ Department of Mathematics\\ Srinagar, 190006\\ Jammu and Kashmir\\ India}
\email{tjalal@nitsri.net}

\begin{abstract}
In this paper, we first define the $\bar{N}(p,q)$ summable sequence spaces and obtain some basic results related to these spaces. The necessary and sufficient conditions for an infinite matrix $A$ to map these spaces into the spaces $c,~c_0,$ and $\ell_{\infty}$ is obtained and Hausdorff measure of noncompactness is then used to obtain the necessary and sufficient conditions for the compactness of linear operators defined on these spaces.
\end{abstract}


\subjclass[2010]{40H05, 46A45, 47B07}

\keywords{Summable sequence spaces, BK spaces, matrix transformations, measures of noncompactness}

\maketitle
\section{Introduction}
\parindent=0mm\vspace{0.00in}
We write $\omega$ for the set of all complex sequences $x =(x_k)_{k=0}^{\infty}$ and $\phi$, $c$, $c_{0}$ and  $\ell_{\infty}$ 
for the sets of all finite sequences, convergent sequences,  sequences convergent to zero, and bounded sequences respectively.   
By $e$ we denote the sequence of 1's, $e=(1 ,1,1,\ldots)$ and by $e^{(n)}$ the sequence with 1 as only nonzero term at the $n$th place for each $n\in \mathbb{N}$, where $\mathbb{N}=\{0,1,2,\ldots\}$. Further by $cs$ and $\ell_1$ we denote the convergent and absolutely convergent series respectively. If $x=(x_k)_{k=0}^{\infty}\in w$ then $x^{[m]}=\sum_{k=0}^{m} x_ke^{(k)} $ denotes the $m-$th section of $x$.\\
A sequence space $X$ is a linear subspace of $\omega$, such a space is called a BK space if it is a Banach space with continuous coordinates\\
$P_n:X\rightarrow\mathbb{C}\hspace{0.2cm}(n=0,1,2,\ldots)$ where $$P_n(x)=x_n,~x=(x_k)_{k=0}^{\infty}\in X.$$ 
The BK space $X$ is said to have AK if every $x=(x_k)_{k=0}^{\infty}\in X$ has a unique representation $x=\sum_{k=0}^{\infty} x_ke^{(k)}$ \cite[Definition 1.18]{malkowsky1}. The spaces $c_{0}$, $c$ and $\ell_\infty$ are BK spaces with respect to the norm 
$$\|x\|_\infty=\sup_{k}\{|x_k|: k\in \mathbb{N}\}.$$ 
The $\beta-$dual of a subset $X$ of $\omega$ is defined by 
$$X^{\beta}=\left\{a\in w:ax=(a_kx_k)\in cs~~\text{for all}~x=(x_k)\in X \right\}$$
If $X$ and $Y$ are Banach Spaces, then by $\mathcal{B}(X,Y)$ we denote the set of all bounded (continuous) linear operators $L:X\rightarrow Y$ , which is itself a Banach space with the operator norm $\|L\|=\sup_{x}\left \{\|L(x)\|_Y:\|x\|=1 \right\}$ for all $L\in \mathcal{B}(X,Y).$ The linear operator $L:X\rightarrow Y$ is said to be compact if its domain is all of $X$ and for every bounded sequence $(x_n)\in X$, the sequence $\left(L(x_n)\right)$ has a subsequence which converges in $Y$. The operator $L\in \mathcal{B}(X,Y)$ is said to be of finite rank if $\dim R(L)<\infty$, where $R(L)$ denotes the range space of $L$. A finite rank operator is clearly compact \cite[Chapter 2]{diagana2013almost}.\\
In this paper, we first define  $\bar{N}(p,q)$ summable sequence spaces as the matrix domains $X_T$ of arbitrary triangle $\bar{N}_{p}^{q}$ and obtain some basic results related to these spaces. We then find out the necessary and sufficient condition for matrix transformations to map these spaces into $c_0$, $c$ and $\ell_{\infty}$. Finally we characterize the classes of compact matrix operators from these spaces into $c_0$, $c$ and $\ell_{\infty}$.

\section{Matrix Domains}

\parindent=0mm\vspace{0.00in}
Given any infinite matrix $A=(a_{nk})_{n,k=0}^{\infty}$ of complex numbers, we write $A_n$ for the sequence in the $n$th row of $A$, $A_n=( a_{nk})_{k=0}^{\infty}$ . The $A-$transform of any $x=(x_k)\in \omega$ is given by $Ax=\left(A_n(x)\right)_{k=0}^{\infty}$, where 
$$ A_n(x)=\sum_{k=0}^{\infty} a_{nk}x_k~~~~~~~~~n\in \mathbb{N}$$
the series on right must converge for each $n\in \mathbb{N}$.\\
If $X $ and $Y$ are subsets of $\omega$, we denote by $(X,Y)$, the class of all infinite matrices  that map $X$ into $Y$. So $A\in (X,Y)$ if and only if $A_n\in X^{\beta} ~,~n=0,1,2,\ldots$ and $Ax\in Y$ for all $x\in X$. The matrix domain of an infinite matrix $A$ in $X$ is defined by 
$$X_A=\left\{x\in \omega:Ax\in X\right\}$$
The idea of constructing a new sequence space by means of the matrix domain of a particular limitation method has been studied by several authors see \cite{jalal2012Diff, bocong2013matrix, jacob1977matrix, djolovic2008matrix, jalal2012newmatrix, jalal201Mat}\\
For any two sequences $x$ and $y$ in $\omega$ the product $xy$ is given by $xy=(x_ky_k)_{k=0}^{\infty}$ and for any subset $X$ of $\omega$
$$y^{-1}*X= \left\{ a\in \omega:ay\in X\right\}$$
We denote by $\mathfrak{U}$ the set of all sequences $u=(u_k)_{k=0}^{\infty}$ such that $u_k\not=0~~\forall ~k=0,1,2,\ldots$ and for any  $u\in \mathfrak{U}$, ${1\over u}=\left({1\over u_k} \right)_{k=0}^{\infty}$. 
\begin{theorem}\label{T11}
a) Let $X$ be a BK space with basis $(\alpha^{(k)})_{k=0}^{\infty}$, $u\in \mathfrak{U}$ and $\beta^{(k)}=(1/u)\alpha^{(k)}$, $k=0,1,\ldots$. Then $(\beta^{(k)})_{k=0}^{\infty}$ is a basis of $Y=u^{-1}*X$.\\
b) Let $(p_k)_{k=0}^{\infty}$ be a positive sequence , $u\in \mathfrak{U}$ a sequence such that 
$$|u_0|\leq |u_1|\leq \cdots~~~\text{ and }~|u_n|\rightarrow\infty~~(n\rightarrow\infty)  $$
and $T$ a triangle with 
$$ t_{nk}=\left\{\begin{matrix}
\frac{p_{n-k}}{u_n} & 0\leq k\leq n\\
0& k>n
\end{matrix}\right. ~~~~~~~~~~~n=0,1,2,\ldots$$
Then $(c_0)_T$ has AK.\\
c) Let $T$ be an arbitrary triangle and $B=|T|$. Then $(c_0)_{[B]}$ has AK if and only if $\lim_{n\rightarrow \infty} t_{nk}=0$ for all $k=0,1,2,\ldots$.   
\end{theorem}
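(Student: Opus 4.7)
My plan is to realise $Y=u^{-1}*X$ as an isometric copy of $X$ under the pointwise-multiplication map $M_u:Y\to X$, $a\mapsto ua$. Equip $Y$ with the norm $\|a\|_Y:=\|ua\|_X$; since $u\in\mathfrak U$ the inverse $x\mapsto x/u$ is well defined on $X$, so $M_u$ is an isometric isomorphism. Isometric isomorphisms transport Schauder bases, and $M_u^{-1}(\alpha^{(k)})=(1/u)\alpha^{(k)}=\beta^{(k)}$; hence for $a\in Y$ the unique expansion $ua=\sum_{k=0}^{\infty}\lambda_k\alpha^{(k)}$ in $X$ translates to
\[
\Bigl\|a-\sum_{k=0}^{m}\lambda_k\beta^{(k)}\Bigr\|_Y=\Bigl\|ua-\sum_{k=0}^{m}\lambda_k\alpha^{(k)}\Bigr\|_X\longrightarrow 0,
\]
with uniqueness of the coefficients inherited from $X$.

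\textbf{Part (b).} Fix $x\in(c_0)_T$ and put $y=Tx\in c_0$. Triangularity gives $(T(x-x^{[m]}))_n=0$ for $n\leq m$ and
\[
\bigl(T(x-x^{[m]})\bigr)_n=y_n-\frac{1}{u_n}\sum_{k=0}^{m}p_{n-k}x_k \qquad(n>m),
\]
so $\|x-x^{[m]}\|_T\leq\sup_{n>m}|y_n|+\sup_{n>m}\bigl|u_n^{-1}\sum_{k=0}^{m}p_{n-k}x_k\bigr|$. The first summand is a tail of the null sequence $y$. The second is the technical heart: my plan is to invert the Nörlund convolution via $q=p^{-1}$ (well defined because $p_0>0$), write $x_k=\sum_{j=0}^{k}q_{k-j}u_jy_j$, substitute into the inner sum, and use $p*q=\delta_0$ to collapse the resulting double sum into a linear combination of $y_j$'s ($j\leq m$) whose coefficients carry a factor of the form $u_j/u_n$. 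The monotonicity $|u_j|\leq|u_n|$ (for $j\leq m<n$) bounds these coefficients by $1$, while $|u_n|\to\infty$ forces them to $0$ as $n\to\infty$; a standard $\varepsilon$-splitting of the range $j\leq m$ then yields the required decay as $m\to\infty$. The combinatorial bookkeeping of this rearrangement is the main obstacle.

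\textbf{Part (c).} Necessity: since AK means in particular that each $e^{(k)}$ belongs to $(c_0)_{[B]}$, applying the defining condition of the space to $e^{(k)}$ immediately gives $b_{nk}=|t_{nk}|\to 0$ as $n\to\infty$. Sufficiency: once $t_{nk}\to 0$ for each $k$, every finite sequence lies in $(c_0)_{[B]}$, so the partial sums $x^{[m]}$ of any $x\in(c_0)_{[B]}$ belong to the space. Non-negativity of the $b_{nk}$ then gives
\[
\|x-x^{[m]}\|_{(c_0)_{[B]}}=\sup_{n>m}\sum_{k=m+1}^{n}b_{nk}|x_k|\leq\sup_{n>m}(B|x|)_n\longrightarrow 0
\]
because $B|x|\in c_0$, establishing the AK expansion; uniqueness follows from the continuity of the coordinate functionals on the BK space.
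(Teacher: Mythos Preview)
For parts (a) and (c) the paper simply cites \cite{Al-Em}; your arguments are the standard ones that reference contains and they are correct.

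For part (b) your route diverges from the paper's. The paper never inverts $T$: it writes, for $n>m$,
\[
T_n(x-x^{[m]})=T_n(x)-\frac{1}{u_n}\sum_{k=0}^{m}p_{n-k}x_k
\]
and bounds the right-hand side by $|T_n(x)|+|T_n(x^{[m]})|$, asserting that each piece is below $\varepsilon/2$ once $m$ is large. (The paper labels the second quantity ``$T_m(x)$'' and invokes $x\in(c_0)_T$; note that this second bound is not actually justified there either, so the paper's argument is itself incomplete at exactly the point you identify as the ``technical heart''.)

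Your alternative plan to control $u_n^{-1}\sum_{k\le m}p_{n-k}x_k$ by inverting the N\"orlund convolution has a genuine gap. Substituting $x_k=\sum_{j\le k}q_{k-j}u_jy_j$ and swapping sums gives
\[
\frac{1}{u_n}\sum_{k=0}^{m}p_{n-k}x_k=\sum_{j=0}^{m}\frac{u_j}{u_n}\,y_j\!\sum_{l=0}^{m-j}p_{(n-j)-l}\,q_l .
\]
The identity $p*q=\delta_0$ reads $\sum_{l=0}^{r}p_{r-l}q_l=[r=0]$; here the inner sum runs only to $m-j$, whereas the convolution index is $n-j>m-j$ (since $n>m$), so you are facing a \emph{truncated} convolution that does not collapse. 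Consequently the coefficient of $y_j$ is $(u_j/u_n)\sum_{l\le m-j}p_{(n-j)-l}q_l$, not merely $u_j/u_n$, and neither the monotonicity $|u_j|\le|u_n|$ nor $|u_n|\to\infty$ bounds that extra factor: with no size hypothesis on $p$ (only positivity), the partial sums $\sum_{l\le m-j}p_{(n-j)-l}q_l$ can grow without control in $n$. The ``combinatorial bookkeeping'' you flag is therefore a real obstruction, not bookkeeping, and the $\varepsilon$-splitting you propose cannot be carried out from these ingredients alone.
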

\begin{proof}
a) (cf. \cite[Theorem 2]{Al-Em})\\
b) $(c_0)_T$ is a BK space by Theorem 4.3.12 in \cite{wilansky2000summability}, the norm $\|x\|_{(c_0)_T}$ on it is defined as \\
$$\|x\|_{(c_0)_T}=\sup_{n}\left|{1\over u_n}\sum_{k=0}^{n} p_{n-k} x_k\right| $$
Since $|u_n|\rightarrow\infty ~(n\rightarrow\infty)$ gives $\phi \subset (c_0)_T$. \\
Let $\epsilon>0$ and $x\in (c_0)_T$ then their exists integer $N>0$, such that $|T_n(x)|<{\epsilon\over 2}$ for all $n\geq N$.\\
Let $m>N$ then \\
\begin{align}
\|x-x^{[m]}\|_{(c_0)_T}=\sup_{n\geq m+1} \left|{1\over u_n} \sum_{k=m+1}^{n} p_{n-k}x_k\right|    \label{1}
\end{align}
Now \\
\begin{align*}
T_n(x)&={1\over u_n}\sum_{k=0}^{n} p_{n-k} x_k \\
T_m(x)&={1\over u_n}\sum_{k=0}^{m} p_{n-k} x_k \\ 
\Rightarrow T_n(x)+T_m(x)&={1\over u_n}\left[ 2(p_n x_0+\cdots+p_{n-m}x_m)+\sum_{k=m+1}^{n}p_{n-k}x_k\right]
\end{align*}
Then by (\ref{1}) we have 
\begin{align*}
\|x-x^{[m]}\|_{(c_0)_T}&\leq \sup_{n\geq m+1}(|T_n(x)|+|T_m(x)|)  \\
&<{\epsilon\over 2}+{\epsilon\over 2}\\
&=\epsilon
\end{align*}  
Hence $x=\sum_{k=0}^{\infty} x_k\beta^{(k)}$.\\
This representation is obviously unique.\\ 
c) Same as done in (cf. \cite[Theorem 2]{Al-Em})
\end{proof}
\section{$\bar{N}(p,q)$ Summable Sequence Spaces}
Let $(p_k)_{k=0}^{\infty}~,~(q_k)_{k=0}^{\infty}$ be positive sequences in $\mathfrak{U}$ and $(R_n)_{n=0}^{\infty}$ the sequence  with $R_n=\sum_{j=0}^{n} p_{n-j}q_{j} $. The $\bar{N}(p,q)$ transform of the sequence $(x_k)_{k=0}^{\infty}$ is the sequence $(t_n)_{n=0}^{\infty}$ defined as 
\begin{align*}
t_n={1\over R_n}\sum_{j=0}^{n}p_{n-j}q_j x_j
\end{align*}
The matrix $\bar{N}_{p}^{q}$ for this transformation is  
\begin{equation}\label{E11}
(\bar{N}_{p}^{q})_{nk}=\left\{\begin{matrix}
{p_{n-k}q_{k}\over R_{n}}& 0\leq k\leq n\\
0& k>n
\end{matrix}\right. 
\end{equation} 
We define the spaces $(\bar{N}_{p}^{q})_{0}$, $(\bar{N}_{p}^{q})$ and $(\bar{N}_{p}^{q})_{\infty}$ that are $\bar{N}(p,q)$ summable to zero, summable and bounded respectively  as  
\begin{align*}
&(\bar{N}_{p}^{q})_0&=(c_0)_{\bar{N}_{p}^{q}}&=\left\{x\in \omega: 
 \bar{N}_{p}^{q}x=\left({1\over R_n}\sum_{k=0}^{n}p_{n-k}q_k x_k \right)_{n=0}^{\infty} \in c_0 \right\}\\
 &(\bar{N}_{p}^{q})&=(c)_{\bar{N}_{p}^{q}}&=\left\{x\in \omega: 
 \bar{N}_{p}^{q}x =\left({1\over R_n}\sum_{k=0}^{n}p_{n-k}q_k x_k \right)_{n=0}^{\infty} \in c \right\} \\
 &(\bar{N}_{p}^{q})_{\infty}&=(\ell_{\infty})_{\bar{N}_{p}^{q}}&=\left\{x\in \omega: \bar{N}_{p}^{q}x=\left({1\over R_n}\sum_{k=0}^{n}p_{n-k}q_k x_k \right)_{n=0}^{\infty} \in \ell_{\infty} \right\}
\end{align*}  

\parindent=0mm\vspace{0.00in}
For any sequence $x=(x_k)_{k=0}^{\infty} $, define $\tau=\tau(x)$ as the sequence with $n$th term given by 
\begin{equation}\label{E12}
\tau_n=(\bar{N}_{p}^{q})_n(x)={1\over R_n}\sum_{k=0}^{n} p_{n-k}q_k x_k~~~~~~~~~~~~~~~~~~(n=0,1,2,\ldots)
\end{equation}
This sequence $\tau$ is called as \emph{weighted means of $x$}. 
\begin{theorem}\label{BKspace}
The spaces $(\bar{N}_{p}^{q})_0$, $ (\bar{N}_{p}^{q})$ and $(\bar{N}_{p}^{q})_{\infty}$ are BK spaces with respect to the norm $\| ~.~\|_{\bar{N}_{p}^{q}}$ given by
\begin{align*}
\| x\|_{\bar{N}_{p}^{q}}=\sup_{n}\left| {1\over R_n}\sum_{k=0}^{n}p_{n-k}q_k x_k \right| 
\end{align*}  
If $R_n\rightarrow \infty$ ($n\rightarrow \infty$), then $(\bar{N}_{p}^{q})_0$ has AK, and every sequence $x=(x_k)_{k=0}^{\infty}\in (\bar{N}_{p}^{q})$ has unique representation 
\begin{align}
x=le+\sum_{k=0}^{\infty}(x_k-l)e^{(k)}   \label{E13}
\end{align}
where $l\in \mathbb{C}$ is such that $x-le\in (\bar{N}_{p}^{q})_0$
\end{theorem}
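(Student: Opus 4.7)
The plan is threefold, matching the three claims of the theorem.

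First, for the BK property, I would invoke the general principle that if $X$ is a BK space and $T$ is a triangle, then $X_T$ is a BK space under the norm $\|x\|_{X_T}=\|Tx\|_X$; this is exactly the Wilansky-type result already used in the proof of Theorem \ref{T11}(b). Here $\bar{N}_p^q$ is a triangle because its diagonal entries $p_0 q_n/R_n$ are nonzero (as $(p_k),(q_k)\in\mathfrak{U}$), and $c_0$, $c$, $\ell_\infty$ are BK in $\|\cdot\|_\infty$, so the three matrix domains are BK spaces under the induced norm, which is exactly the formula claimed.

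Next, for AK of $(\bar{N}_p^q)_0$, I would reduce to Theorem \ref{T11}(b). The substitution $y=qx$ (i.e.\ $y_k=q_k x_k$) defines an isometric isomorphism from $(\bar{N}_p^q)_0$ onto $(c_0)_T$, where $T$ is the triangle with entries $t_{nk}=p_{n-k}/R_n$ for $0\le k\le n$. Applying Theorem \ref{T11}(b) with $u_n=R_n$ (using $R_n\to\infty$) yields AK for $(c_0)_T$. Then Theorem \ref{T11}(a), applied with the weight $u=q$, identifies $(\bar{N}_p^q)_0=q^{-1}*(c_0)_T$ and supplies the basis $((1/q_k)e^{(k)})_k$. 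Expanding a generic $x\in(\bar{N}_p^q)_0$ as $x=\sum_k c_k(1/q_k)e^{(k)}$ and matching $n$th coordinates gives $c_n=q_n x_n$, hence $x=\sum_{k=0}^\infty x_k e^{(k)}$, the desired AK property.

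Finally, for the representation, I would use the identity $\bar{N}_p^q e=e$, which is immediate from $\sum_{k=0}^n p_{n-k}q_k=R_n$. If $x\in(\bar{N}_p^q)$ with $(\bar{N}_p^q x)_n\to l$, then $\bar{N}_p^q(x-le)=\bar{N}_p^q x-le\to 0$, placing $x-le\in(\bar{N}_p^q)_0$. The AK property from step two then gives $x-le=\sum_{k=0}^\infty(x_k-l)e^{(k)}$ in the BK norm, which is exactly (\ref{E13}). Uniqueness of $l$ is automatic, since any alternative $l'$ satisfying $x-l'e\in(\bar{N}_p^q)_0$ would force $(\bar{N}_p^q x)_n\to l'$, hence $l'=l$; uniqueness of the expansion coefficients then follows from the continuity of the coordinate projections on the BK space.

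The main obstacle I anticipate is the hypothesis check for Theorem \ref{T11}(b), which also demands $|u_n|$ to be nondecreasing, a property not obvious for $R_n=\sum_j p_{n-j}q_j$ under only the assumption $R_n\to\infty$. If strict verification is required, I would either impose monotonicity as an auxiliary hypothesis or bypass Theorem \ref{T11}(b) altogether by adapting its proof: the partial-transform tail identity used there relies on positivity of the $p_{n-k}$ and on $R_n\to\infty$ rather than on monotonicity, and adapts to $\bar{N}_p^q$ with the extra $q_k$ factor carried along unchanged.
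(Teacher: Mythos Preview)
Your proposal is correct and follows essentially the same route as the paper: Wilansky's triangle result for the BK property, the factorization $(\bar{N}_p^q)_0=q^{-1}*(c_0)_T$ together with Theorem~\ref{T11} for AK, and the identity $\bar{N}_p^q e=e$ for the representation~\eqref{E13}. Your caveat about the monotonicity hypothesis $|u_0|\le|u_1|\le\cdots$ in Theorem~\ref{T11}(b) is well taken---the paper applies that theorem with $u_n=R_n$ without verifying this, so your suggested workaround (adapting the tail estimate directly) is in fact more careful than the paper's own argument.
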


\begin{proof}
The sets  $(\bar{N}_{p}^{q})_0$, $ (\bar{N}_{p}^{q})$ and $(\bar{N}_{p}^{q})_{\ell_{\infty}}$  are BK spaces   (\cite{wilansky2000summability}  {Theorem 4.3.12}),\\
Let us consider the matrix $T=(t_{nk})$ defined by 
$$t_{nk}=\left\{\begin{matrix}
\frac{p_{n-k}}{R_n} & 0\leq k\leq n\\
0& k>n
\end{matrix}\right. ~~~~~~~~~~~n=0,1,2,\ldots$$
Then $(\bar{N}_{p}^{q})_0=q^{-1}*(c_0)_T$ has AK by Theorem \ref{T11}.\\
Now if $x\in (\bar{N}_{p}^{q})$, then there exists a $l\in \mathbb{C}$ such that $x-le \in(\bar{N}_{p}^{q})_0$\\
Now $\tau(e)=(\tau_n)_{n=0}^{\infty}$  where 
\begin{align*}
\tau_n=(\bar{N}_{p}^{q})_n(e)&={1\over R_n}\sum_{k=0}^{\infty} p_{n-k}q_k e_k~~~~~~~~~(n=0,1,2,\ldots)\\
&={1\over R_n}\sum_{k=0}^{\infty} p_{n-k}q_k \hspace{1cm} \text{As } ~e_k=1~\forall~(k=0,1,2,\ldots)\\
&=1 
\end{align*}
Therefore $\tau(e)=e$ which implies the uniqueness of $l$.\\
Therefore \eqref{E13} follows from the fact that $(\bar{N}_{p}^{q})_{\infty}$ has AK.
\end{proof}

\parindent=5mm\vspace{0.00in} 
 Now $\bar{N}_{p}^{q}$ is a triangle, it has a unique inverse and the inverse is also a triangle \cite{Al-Em03}. Take $H_0^{(p)}={1\over p_0}$ and 
\begin{align}\label{H}
H_{n}^{(p)}={1\over p_{0}^{n+1}}\begin{vmatrix}
p_1&p_0&0&0&\ldots&0 \\
p_2&p_1&p_0&0&\ldots&0 \\
\vdots&\vdots&\vdots&\vdots&\ddots &\vdots\\
p_{n-1}&p_{n-2}&p_{n-3}&p_{n-4}&\ldots &p_0\\
p_{n}&p_{n-1}&p_{n-2}&p_{n-3}&\ldots &p_1
\end{vmatrix}
\end{align}
Then the inverse of matrix defined in  \eqref{E11} is the matrix $S=\left(s_{nk}\right)_{n,k=0}^{\infty}$ which is defined as see \cite{Diff15} in 
\begin{align}\label{E14}
s_{nk}=\left\{\begin{matrix}
(-1)^{n-k}\frac{H_{n-k}^{(p)}}{q_n}R_k&0\leq k\leq n \\ 0&k>n 
\end{matrix}\right.
\end{align}

\subsection{$\beta$ dual of $\bar{N}(p,q)$ Sequence Spaces}
In order to find the $\beta$ dual we need the following results \\
\begin{lemma}\label{L2} \cite{st1} If $A=(a_{nk})_{n,k=0}^{\infty}$, then 
$A\in (c_0,c)$ if and only if 
\begin{equation} \label{LE1}
\sup_{n}\sum_{k=0}^{\infty}\left|a_{nk} \right|<\infty
\end{equation}
\begin{equation}\label{LE2}
\lim_{n\rightarrow\infty} a_{nk}-\alpha_k=0\hspace{1cm} \text{for every }~ k.
\end{equation}
\end{lemma}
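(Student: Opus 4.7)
The plan is standard: view each row $A_n$ as a bounded linear functional on $c_0$ and apply the Banach--Steinhaus theorem in one direction, together with a careful splitting estimate in the other.

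For necessity, assume $A\in(c_0,c)$. Since $c_0^{\beta}=\ell_1$, each row $A_n$ lies in $\ell_1$, so the map $L_n:c_0\to\mathbb{C}$, $L_n(x)=A_n(x)$, is a bounded linear functional with norm $\sum_k|a_{nk}|$. For each fixed $x\in c_0$ the sequence $(L_n(x))=Ax$ converges and is therefore bounded, so the Uniform Boundedness Principle yields \eqref{LE1}. Taking $x=e^{(k)}\in c_0$ forces the $k$-th column $(a_{nk})_{n=0}^{\infty}$ to belong to $c$, which gives the existence of the pointwise limit in \eqref{LE2} and identifies it as $\alpha_k$.

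For sufficiency, set $M=\sup_n\sum_k|a_{nk}|$. First I would note, by a Fatou/finite-section argument, that
$$\sum_{k=0}^N|\alpha_k|=\lim_{n\to\infty}\sum_{k=0}^N|a_{nk}|\le M$$
for every $N$, so $(\alpha_k)\in\ell_1$ with $\sum_k|\alpha_k|\le M$. Given $x\in c_0$ and $\epsilon>0$, choose $N$ with $|x_k|<\epsilon/(3M)$ for all $k>N$; then the two tails $\sum_{k>N}|a_{nk}||x_k|$ and $\sum_{k>N}|\alpha_k||x_k|$ are each at most $\epsilon/3$, uniformly in $n$, while the finite head $\sum_{k=0}^N|a_{nk}-\alpha_k||x_k|$ tends to $0$ as $n\to\infty$ by \eqref{LE2}. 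The triangle inequality then yields $|A_n(x)-\sum_k\alpha_k x_k|<\epsilon$ for all sufficiently large $n$, so $Ax\in c$ with limit $\sum_k\alpha_k x_k$, proving $A\in(c_0,c)$.

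The main subtlety is in the sufficiency direction: one needs $(\alpha_k)\in\ell_1$, not merely the existence of each pointwise limit, in order to control the $\alpha$-tail uniformly and close the $\epsilon$-estimate. This is exactly what condition \eqref{LE1} supplies through the finite-section inequality above, so the two hypotheses together are precisely the right amount of information.
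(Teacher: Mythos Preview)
Your argument is correct and is exactly the classical Banach--Steinhaus/tail-splitting proof of this characterization. Note, however, that the paper does not supply its own proof of this lemma: it is quoted as a known result with a citation to \cite{st1}, so there is no in-paper argument to compare against. Your write-up would serve perfectly well as a self-contained replacement for that citation.
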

\begin{lemma} \label{L3} \cite{cooke2014infinite} If $A=(a_{nk})_{n,k=0}^{\infty}$, then 
$A\in (c,c)$ if and only if conditions \eqref{LE1}, \eqref{LE2} holds and 
\begin{equation} \label{LE3}
\lim_{n\rightarrow\infty} A_n=\lim_{n\rightarrow\infty} a_{nk}\hspace{1cm} \text{exists for all } k
\end{equation} 
\end{lemma}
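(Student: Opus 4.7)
The plan is to reduce everything to Lemma \ref{L2} by decomposing an arbitrary convergent sequence into the constant at its limit plus a null sequence, and then to read off the extra condition \eqref{LE3} from the action of $A$ on the single sequence $e=(1,1,1,\ldots)$.

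For necessity, I would start from the obvious inclusion $c_0\subset c$: if $A\in(c,c)$ then in particular $Ax\in c$ for every $x\in c_0$, so $A\in(c_0,c)$, and Lemma \ref{L2} yields both \eqref{LE1} and \eqref{LE2} at once. To obtain \eqref{LE3}, I would evaluate $A$ on the specific convergent sequence $e\in c$: the $n$th entry is $A_n(e)=\sum_{k=0}^{\infty}a_{nk}$ (well defined because \eqref{LE1} already forces $(a_{nk})_{k}\in\ell_{1}\subset c^{\beta}$), and $Ae\in c$ means exactly that this sequence has a limit, i.e.\ \eqref{LE3} holds.

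For sufficiency, assume \eqref{LE1}, \eqref{LE2}, \eqref{LE3} and take any $x\in c$ with $l=\lim_{k}x_k$. Write $x=le+y$ with $y=x-le\in c_0$. By Lemma \ref{L2} in the sufficiency direction, $A\in(c_0,c)$, so $Ay\in c$. Meanwhile $A(le)=l\cdot (A_n(e))_n=l\cdot\bigl(\sum_{k}a_{nk}\bigr)_n\in c$ by \eqref{LE3}. Adding row by row gives $Ax=lA(e)+Ay\in c$, so $A\in(c,c)$.

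The only real bookkeeping step is justifying the row-wise splitting $A_n(x)=l\sum_{k}a_{nk}+\sum_{k}a_{nk}(x_k-l)$, which requires absolute convergence of each tail; this is immediate from \eqref{LE1}, since $(a_{nk})_k\in\ell_{1}$ for every $n$. I do not anticipate any deeper obstacle: once Lemma \ref{L2} is available, the Kojima--Schur condition \eqref{LE3} encodes precisely the one additional degree of freedom between $c_0$ and $c$, namely the constant direction along $e$.
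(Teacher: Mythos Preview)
Your argument is correct and is essentially the classical Kojima--Schur derivation: restrict to $c_0$ to inherit \eqref{LE1} and \eqref{LE2} from Lemma~\ref{L2}, test on $e$ to get \eqref{LE3}, and for sufficiency split $x=le+(x-le)$ and use linearity together with the row-wise absolute convergence guaranteed by \eqref{LE1}. Note that the paper itself offers no proof of this lemma at all---it is simply quoted from \cite{cooke2014infinite}---so there is nothing to compare your route against; your write-up would in fact supply what the paper omits.
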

\begin{lemma} \label{L4} \cite{cooke2014infinite} If $A=(a_{nk})_{n,k=0}^{\infty}$, then 
$A\in (\ell_{\infty},c)$ if and only if condition \eqref{LE2} holds and 
\begin{equation} \label{LE4}
\lim_{n\rightarrow\infty} \sum_{k=0}^{\infty}|a_{nk}|=\sum_{k=0}^{\infty} \left|\lim_{n\rightarrow\infty} a_{nk}\right| 
\end{equation} 
\end{lemma}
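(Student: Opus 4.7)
The plan is to prove both directions, with most of the work residing in necessity.

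For necessity, assume $A \in (\ell_\infty, c)$. Testing on the coordinate vectors $e^{(k)} \in \ell_\infty$ shows $(a_{nk})_n$ converges for each $k$; writing $\alpha_k := \lim_n a_{nk}$ gives \eqref{LE2}. Since $A : \ell_\infty \to c \hookrightarrow \ell_\infty$ is bounded, the uniform boundedness principle yields $M := \sup_n \sum_k |a_{nk}| < \infty$, and Fatou's lemma then gives $(\alpha_k) \in \ell_1$ with $\sum_k |\alpha_k| \le M$.

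The heart of the necessity is \eqref{LE4}, which I would derive from the stronger statement $\sum_k |a_{nk} - \alpha_k| \to 0$; the reverse triangle inequality then recovers \eqref{LE4}. If this stronger convergence fails, after passing to a subsequence $\sum_k |a_{n_j k} - \alpha_k| \ge 3\delta$ for some $\delta > 0$. Using \eqref{LE2} together with the $\ell_1$-tails of $(\alpha_k)$ and of each individual row, a gliding-hump induction thins $(n_j)$ further and produces integers $0 = k_0 < k_1 < \cdots$ with
\begin{equation*}
\sum_{k \le k_{j-1}}|a_{n_j k} - \alpha_k| < \delta/4 \quad \text{and} \quad \sum_{k > k_j}|a_{n_j k} - \alpha_k| < \delta/4,
\end{equation*}
so the block $k_{j-1} < k \le k_j$ carries mass at least $5\delta/2$ of $|A_{n_j} - \alpha|$. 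Defining $y \in \ell_\infty$ with $\|y\|_\infty \le 1$ supported on the odd-indexed blocks, with phases aligned so that $(a_{n_j k} - \alpha_k)\,y_k \ge 0$ there, a block-by-block estimate shows $\operatorname{Re}\bigl(A_{n_j}(y) - \sum_k \alpha_k y_k\bigr) \ge 2\delta$ for odd $j$ but at most $\delta/2$ for even $j$. Hence $(A_n(y))_n$ cannot converge, contradicting $Ay \in c$.

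For sufficiency, assume \eqref{LE2} and \eqref{LE4}. Since \eqref{LE4} forces $\sum_k |\alpha_k| < \infty$, for every $x \in \ell_\infty$ with $\|x\|_\infty \le 1$ the series $\ell := \sum_k \alpha_k x_k$ converges absolutely. Given $\epsilon > 0$, pick $N$ with $\sum_{k > N}|\alpha_k| < \epsilon$; by \eqref{LE4} combined with \eqref{LE2},
\begin{equation*}
\lim_n \sum_{k > N}|a_{nk}| = \sum_k |\alpha_k| - \sum_{k \le N}|\alpha_k| = \sum_{k > N}|\alpha_k| < \epsilon,
\end{equation*}
so for $n$ large the tail $\sum_{k > N}|a_{nk} x_k| < 2\epsilon$, while the head $\sum_{k \le N}(a_{nk} - \alpha_k)\,x_k$ tends to $0$ by \eqref{LE2}; splitting $A_n(x) - \ell$ into head plus the two tails yields $|A_n(x) - \ell| < 4\epsilon$ eventually, so $Ax \in c$. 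The main obstacle is the gliding-hump construction of $y$ in the necessity: the alternating-block sign choice is what turns the failure of $\sum_k |a_{nk} - \alpha_k| \to 0$ into genuinely oscillating $A_n(y)$ that produces the contradiction, whereas sufficiency is essentially the uniform integrability already packaged into \eqref{LE4}.
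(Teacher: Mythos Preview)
The paper does not prove Lemma~\ref{L4}; it is simply cited from \cite{cooke2014infinite} without argument. Your self-contained proof is correct and follows the classical route. For necessity, the gliding-hump construction---thinning $(n_j)$ so that the $\ell_1$-mass of $A_{n_j}-\alpha$ is concentrated on disjoint blocks, then assigning block-wise unimodular values on alternate blocks to produce a $y\in\ell_\infty$ on which $(A_n y)$ genuinely oscillates---is the standard device, and your $\delta/4$ head/tail versus $5\delta/2$ block-mass bookkeeping checks out (in particular the phase choice is consistent because each block is tied to a single index $j$). For sufficiency, the head--tail split, using \eqref{LE2} on the finitely many coordinates $k\le N$ and \eqref{LE4} to transfer control of $\sum_{k>N}|\alpha_k|$ to $\sum_{k>N}|a_{nk}|$, is exactly the right mechanism.

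One minor caveat: you assert that \eqref{LE4} ``forces $\sum_k|\alpha_k|<\infty$'', but the equality \eqref{LE4} by itself does not exclude both sides being $+\infty$, and even when the right side is finite it only bounds $\sum_k|a_{nk}|$ for \emph{large} $n$. The finiteness of every row-sum (i.e.\ $A_n\in\ell_1$ for all $n$) is needed so that $A_n(x)$ is defined for arbitrary $x\in\ell_\infty$; this is an implicit hypothesis in the lemma as stated rather than something derivable from \eqref{LE2} and \eqref{LE4} alone, so the issue lies with the formulation quoted in the paper, not with your reasoning.
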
\begin{theorem}\label{T2}
Let $\left(p_k\right)_{k=0}^{\infty}~,~\left(q_k\right)_{k=0}^{\infty}$ be positive sequences, $R_n=\sum_{j=0}^{n}p_{n-j} q_{j} $ and  $a=(a_k)\in \omega$ ,  we define a matrix $C=(c_{nk})_{n,k=0}^{\infty}$ as , 
 \begin{align}\label{E15}
 c_{nk}=\left\{\begin{matrix}
R_k \left[\sum_{j=k}^{n} \left(-1\right)^{j-k}\left({H_{j-k}^{(p)}\over q_{j}}a_j\right) \right] &0\leq k\leq n\\ 0& k>n \end{matrix}\right.
 \end{align}
and consider the sets 
\begin{align*}
c_1&=\left\{a\in \omega:\sup_{n}\sum_{k} |c_{nk}|<\infty\right\} &;
c_2&=\left\{a\in \omega:\lim_{n\rightarrow\infty} c_{nk} \text{ exists for each }~k\in \mathbb{N}\right\}\\
c_3&=\left\{a\in \omega:\lim_{n\rightarrow\infty}\sum_{k} |c_{nk}|=\sum_{k} \left|\lim_{n\rightarrow\infty} c_{nk}\right|\right\}&;
c_4&=\left\{a\in \omega:\lim_{n\rightarrow\infty} \sum_{k} c_{nk} \text{ exists }\right\}
\end{align*}
Then $\left[\left(\bar{N}_{p}^{q}\right)_0\right]^{\beta}=c_1\cap c_2$, 
$\left[\left(\bar{N}_{p}^{q}\right)\right]^{\beta}=c_1\cap c_2\cap c_4$ and 
$\left[\left(\bar{N}_{p}^{q}\right)_\infty\right]^{\beta}=c_2\cap c_3$. 
\end{theorem}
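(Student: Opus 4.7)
The plan is to reduce each of the three $\beta$-dual characterizations to membership of a single auxiliary matrix $C$ in a known matrix class $(Y,c)$, and then invoke Lemmas \ref{L2}, \ref{L3}, \ref{L4}. The lever that makes this work is that $\bar{N}_p^q$ is a triangle whose inverse $S$ is given explicitly by \eqref{E14}, so for $y = \bar{N}_p^q x$ we have $x_k = \sum_{j=0}^k s_{kj} y_j$, and this substitution converts the partial sums of $ax$ into partial sums of $Cy$.

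Concretely, I would fix $a\in\omega$ and an $x$ in one of the three spaces, set $y=\bar{N}_p^q x$, and rewrite the $n$th partial sum of $ax$ by swapping the two finite sums:
\begin{align*}
\sum_{k=0}^{n} a_k x_k \;=\; \sum_{k=0}^{n} a_k \sum_{j=0}^{k} s_{kj}\, y_j \;=\; \sum_{j=0}^{n}\!\left(\sum_{k=j}^{n} a_k s_{kj}\right)\! y_j \;=\; (Cy)_n,
\end{align*}
where $c_{nj}=\sum_{k=j}^{n} a_k s_{kj}$. Substituting $s_{kj}=(-1)^{k-j} H_{k-j}^{(p)} R_j / q_k$ from \eqref{E14} and relabeling the dummy index recovers precisely the matrix $C$ defined in \eqref{E15}. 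The interchange is purely of two finite sums, so there is no convergence question at this step.

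Next I would use the fact that, since $\bar{N}_p^q$ is a triangle, the map $x\mapsto y$ is a bijection from $(\bar{N}_p^q)_0$, $(\bar{N}_p^q)$, $(\bar{N}_p^q)_\infty$ onto $c_0$, $c$, $\ell_\infty$ respectively. Hence $a\in[(\bar{N}_p^q)_0]^\beta$ iff $\sum a_k x_k$ converges for every such $x$, iff $Cy\in c$ for every $y\in c_0$, iff $C\in(c_0,c)$; and similarly for the other two spaces. Lemma \ref{L2} then translates $C\in(c_0,c)$ into \eqref{LE1} and \eqref{LE2}, giving $c_1\cap c_2$. Lemma \ref{L3} translates $C\in(c,c)$ into \eqref{LE1}, \eqref{LE2}, \eqref{LE3}; reading \eqref{LE3} as the existence of $\lim_n \sum_k c_{nk}$ gives $c_1\cap c_2\cap c_4$. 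Lemma \ref{L4} translates $C\in(\ell_\infty,c)$ into \eqref{LE2} and \eqref{LE4}, giving $c_2\cap c_3$ (condition \eqref{LE1} is subsumed by \eqref{LE4}, which is why $c_1$ does not appear in the last case).

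The main obstacle I anticipate is not conceptual but notational: one must carry the double subscripts through the substitution cleanly and verify that the relabeled expression really matches \eqref{E15}, including the sign $(-1)^{j-k}$ and the factor $R_k$ outside the sum. A small additional point worth recording explicitly is the absorption of \eqref{LE1} into \eqref{LE4} in the $\ell_\infty$ case; everything else is bookkeeping on top of the three lemmas and the inversion formula for the triangle $\bar{N}_p^q$.
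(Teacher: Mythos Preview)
Your proposal is correct and follows essentially the same route as the paper: rewrite $\sum_{k=0}^n a_k x_k$ via the inverse triangle $S$ as $(Cy)_n$ with $y=\bar{N}_p^q x$, then reduce the $\beta$-dual question to $C\in(c_0,c)$, $(c,c)$, $(\ell_\infty,c)$ and invoke Lemmas~\ref{L2}--\ref{L4}. Your write-up is in fact a bit cleaner than the paper's, which writes ``$Cy\in cs$'' where it means $Cy\in c$, and does not comment on why $c_1$ is absorbed in the $\ell_\infty$ case.
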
  
\begin{proof}
We prove the result for $\left[\left(\bar{N}_{p}^{q}\right)_0\right]^{\beta}$.\\
Let  $x\in \left(\bar{N}_{p}^{q}\right)_0$ then there exists a $y$ such that $y= \bar{N}_{p}^{q} x$.\\
Hence 
\begin{align*}
\sum_{k=0}^{n}a_kx_k&=\sum_{k=0}^{n}a_k\left(\bar{N}_{p}^{q}\right)^{-1}y_k\\
&=\sum_{k=0}^{n}a_k \left[\sum_{j=0}^{k}(-1)^{k-j}R_j\left({H_{k-j}^{(p)}\over q_{k}}\right) y_j\right]\\
&=\sum_{k=0}^{n} R_{k}\left[\sum_{j=k}^{n} \left(-1\right)^{j-k}\left({H_{j-k}^{(p)}\over q_{j}}a_j\right) \right]y_k\\
&=(Cy)_n
\end{align*}
So $ax=(a_nx_n)\in cs$ whenever $x\in \left(\bar{N}_{p}^{q}\right)_0$ if and only if $Cy\in cs$ whenever $y\in c_0$.\\
Using Lemma \ref{L2} we get  $\left[\left(\bar{N}_{p}^{q}\right)_0\right]^{\beta}=c_1\cap c_2$.\\ 
Similarly using Lemma \ref{L3} and Lemma \ref{L4} the $\beta$ dual of $\left(\bar{N}_{p}^{q}\right)$ and $\left(\bar{N}_{p}^{q}\right)_{\infty}$ can be found  same way we can show the other two results as well. 
\end{proof}
 
Let $X\subset\omega$ be a normed space and $a\in\omega$. Then we write
$$\|a\|^*=\sup\left\{\left| \sum_{k=0}^{\infty} a_kx_k\right|:\|x\|=1\right\}$$
provided the term on the right side exists and is finite, which is the case whenever $X$ is a BK space and $a\in X^{\beta}$ \cite[Theorem 7.2.9]{wilansky2000summability}.

\begin{theorem}\label{T3}
For $\left[\left(\bar{N}_{p}^{q}\right)_0\right]^{\beta}$ ,  $\left[\left(\bar{N}_{p}^{q}\right)\right]^{\beta}$ and $\left[\left(\bar{N}_{p}^{q}\right)_\infty\right]^{\beta}$ the norm $\|~.~\|^*$ is defined as

$$\|a\|^*=\sup_{n}\left\{\sum_{k=0}^{n} R_{k}\left|\sum_{j=k}^{n} \left(-1\right)^{j-k}{H_{j-k}^{(p)}\over q_{j}}a_j\right|\right\}$$ 
\end{theorem}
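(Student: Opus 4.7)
The plan is to transfer the computation to the model space $Y\in\{c_0,c,\ell_\infty\}$ via the triangle $\bar{N}_p^q$. From the proof of Theorem~\ref{T2} we already have the identity
\[
\sum_{k=0}^n a_k x_k = (Cy)_n = \sum_{k=0}^n c_{nk}y_k,\qquad y=\bar{N}_p^q x,
\]
and since $\bar{N}_p^q$ is a triangle it realises an isometric isomorphism between each of the three summable spaces and its model $Y$, with $\|x\|_{\bar{N}_p^q}=\|y\|_\infty$. Consequently $\|a\|^*=\sup\{|\lim_n(Cy)_n|:\|y\|_\infty\le 1,\ y\in Y\}$.

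For the upper bound, for any $\|x\|_{\bar{N}_p^q}\le1$ and every $n$,
\[
\Bigl|\sum_{k=0}^n a_k x_k\Bigr|\le \sum_{k=0}^n|c_{nk}|\,|y_k|\le\|y\|_\infty\sup_m\sum_{k=0}^m|c_{mk}|;
\]
since $a\in X^\beta$ forces the series to converge, passing to the limit in $n$ and supping over the unit ball yields $\|a\|^*\le\sup_n\sum_{k=0}^n|c_{nk}|$ uniformly in the three cases.

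For the lower bound I would work at one level $n_0$ at a time: pick $y^{(n_0)}\in\phi\subset Y$ with $y^{(n_0)}_k=\overline{\mathrm{sgn}(c_{n_0,k})}$ for $k\le n_0$ and $0$ otherwise, so $\|y^{(n_0)}\|_\infty\le1$ and $x^{(n_0)}=(\bar{N}_p^q)^{-1}y^{(n_0)}$ lies in the unit ball of $X$. The partial-sum identity at level $n_0$ reads
\[
\sum_{k=0}^{n_0}a_kx^{(n_0)}_k=(Cy^{(n_0)})_{n_0}=\sum_{k=0}^{n_0}|c_{n_0,k}|.
\]
Invoking the sectional identity $\|a\|^*=\sup_n\|a^{[n]}\|^*$ for BK spaces containing $\phi$ \cite[Theorem~7.2.9]{wilansky2000summability} and taking the supremum over $n_0$ then closes the lower bound.

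The main obstacle is precisely this sectional identity, which in the present setting rests on the estimate $\|x^{[n]}\|_{\bar{N}_p^q}\le\|x\|_{\bar{N}_p^q}$ and is not transparent from the definition: for $m>n$ the quantity $(\bar{N}_p^q x^{[n]})_m=(1/R_m)\sum_{k=0}^n p_{m-k}q_k x_k$ is a truncated weighted mean with no \emph{a priori} control by the full $(\bar{N}_p^q x)_m$. I would handle it by first establishing the sectional estimate in the auxiliary triangle space $(c_0)_T$ of Theorem~\ref{T11}~(b) — exploiting the telescoping identity for $T_n(x)+T_m(x)$ used in its AK proof — and then transferring it to $(\bar{N}_p^q)_0=q^{-1}*(c_0)_T$, with the $c$ and $\ell_\infty$ versions following in parallel.
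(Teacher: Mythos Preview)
Your approach is essentially the paper's: transfer to the model space via the isometry $\bar N_p^q$, use the Abel-type identity $\sum_{k=0}^{n}a_kx_k=(Cy)_n$ for the upper bound, and for the lower bound choose the same finitely supported test vector $y^{(n_0)}_k=\mathrm{sgn}(c_{n_0,k})$ (in the paper's notation $\tau_k(x^{(n)})=\mathrm{sign}\,d_k^{[n]}$).

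The one substantive divergence is your detour through the sectional identity $\|a\|^*=\sup_n\|a^{[n]}\|^*$ and the attendant worry about $\|x^{[n]}\|_{\bar N_p^q}\le\|x\|_{\bar N_p^q}$. The paper does not use this at all: since $x^{(n)}$ already lies in the closed unit ball, it simply bounds $\|a\|^*\ge\bigl|\sum_{k=0}^{\infty}a_kx_k^{(n)}\bigr|$ directly and then identifies that full series with $\sum_k|d_k^{[n]}|$. So the obstacle you describe, and the proposed fix via the telescoping estimate of Theorem~\ref{T11}(b), is extra machinery not present in the paper's argument.

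Where your caution is warranted is precisely the step the paper writes rather casually: the equality $\bigl|\sum_{k=0}^{\infty}a_kx_k^{(n)}\bigr|=\sum_{k=0}^{n}|d_k^{[n]}|$. As you observe, the left side is $\lim_{m\to\infty}(Cy^{(n)})_m=\sum_{k\le n}\hat c_k\,y_k^{(n)}$ with $\hat c_k=\lim_m c_{mk}$, not $\sum_{k\le n}c_{nk}\,y_k^{(n)}$; the paper's displayed chain conflates the partial sum at level $n$ with the full series. Your instinct that something must be added here is correct; the paper simply does not address it, so your sectional-identity route is one way to patch a point the paper's proof leaves implicit rather than a deviation from it.
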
 
\begin{proof}
If $x^{[n]}$ denotes the $n$th section of the sequence $x\in \left(\bar{N}_{p}^{q}\right)_0$ then using (\ref{E12}) we have 
\begin{align*}
\tau_{k}^{[n]}=\tau_{k}(x^{[n]})={1\over R_k}\sum_{j=0}^{k}p_{n-j}q_jx_{j}^{[n]}
\end{align*}
Let $a\in \left[\left(\bar{N}_{p}^{q}\right)_0\right]^{\beta}$, then for any non-negative integer $n$ define the sequence $d^{[n]}$ as 
$$d_{k}^{[n]}= \left\{\begin{matrix}
R_{k}\left[\sum_{j=k}^{n} \left(-1\right)^{j-k}{H_{j-k}^{(p)}\over q_{j}}a_j\right] &0\leq k\leq n\\
0& k>n \end{matrix}\right.$$
Let $\|a\|_{\Pi} = \sup_{n}\| d^{[n]}\|_1=\sup_{n}\left(\sum_{k=0}^{\infty}|d_k^{[n]}|\right) $ where $\Pi=\left[\left(\bar{N}_{p}^{q}\right)\right]^{\beta}$. 
Then
\begin{align*}
\left|\sum_{k=0}^{\infty}a_k x_k^{[n]}\right|&=\left|\sum_{k=0}^{n} a_k \left( \sum_{j=0}^{k} \left( -1 \right)^{k-j} 
{H_{k-j}^{(p)}\over q_k} R_j \tau_{j}^{[n]}\right)\right| ~~~~~~~~~~~\text{Using} \eqref{E14}\\
&=\left|\sum_{k=0}^{n}R_k\left(\sum_{j=k}^{n}\left(-1\right)^{j-k} {H_{j-k}^{(p)}\over q_j} a_j\right)\tau_{k}^{[n]}\right| \\
&\leq \sup_{k}|\tau_{k}^{[n]}|\cdot \left(\sum_{k=0}^{n}R_k \left|\sum_{j=k}^{n}\left(-1\right)^{j-k} {H_{j-k}^{(p)}\over q_j} a_j\right|\right) \\
&=\|x^{[n]}\|_{\bar{N}_{p}^{q}}\|d^{[n]}\|_1 \\
&=\|a\|_{\Pi}\|x^{[n]}\|_{\bar{N}_{p}^{q}}.
\end{align*}
Hence \begin{equation} \label{E16}
\|a\|^{*}\leq \|a\|_{\Pi}. 
\end{equation}
To prove the converse define the sequence $x^{(n)}$ for any arbitrary $n$ by 
$$\tau_{k}\left(x^{(n)}\right)=\text{sign}\left( d_{k}^{[n]}\right)\hspace{1cm}(k=0,1,2,\ldots). $$ 
Then 
$$\tau_{k}\left(x^{(n)}\right)=0~~\text{  for } k>n~~ \text{ i.e }~x^{(n)}\in \left(\bar{N}_{p}^{q}\right)_0, \hspace{1cm} \|x^{(n)}\|_{\bar{N}_{p}^{q}}=\|\tau_{k}\left(x^{(n)}\right)\|_{\infty}\leq 1 .$$
and 
\begin{align*}
\left| \sum_{k=0}^{\infty}a_kx_{k}^{(n)}\right|=\left| \sum_{k=0}^{n}d_{k}^{[n]}x_{k}^{(n)}\right|\leq \sum_{k=0}^{n}\left|d_{k}^{[n]}\right|\leq \|a\|^*.
\end{align*}
Since $n$ is arbitrarily choosen so  
\begin{equation} \label{E17} 
\|a\|_{\Pi}\leq \|a\|^{*} .
\end{equation}
From \eqref{E16} and \eqref{E17} we get the required conclusion. 
\end{proof}

\parindent=0mm\vspace{0.01in}
Some well known results that are required for proving the compactness of operators are \\
\begin{proposition}\label{P3}
(cf. \cite{mal98}, Theorem 7) Let $X$ and $Y$ be BK spaces, then $(X,Y)\subset \mathcal{B}(X,Y)$ that is every matrix $A$ from $X$ into $Y$ defines an element $L_A$ of $\mathcal{B}(X,Y)$ where 
$$L_A(x)=A(x)~~~~~~~~~~~~~~\forall~x\in X.$$
Also $A\in (X,\ell_\infty)$ if and only if
$$\|A\|^*=\sup_{n}\|A_n\|^*=\|L_A
\|<\infty .$$ 
If $\left(b^{(k)}\right)_{k=0}^{\infty}$ is a basis of $X , Y$ and 
$Y_1$ are FK spaces with $Y_1$ a closed subspace of $Y$, then 
$A\in (X,Y_1)$ if and only if $A\in (X,Y)$ and $A\left(b^{(k)}\right)
\in Y_1$ for all $k=0,1,2,\ldots$.
\end{proposition}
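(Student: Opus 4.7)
The statement has three assertions and each calls for a different standard argument from the theory of FK spaces, so my plan is to handle them one at a time.

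For the first assertion, that every $A\in(X,Y)$ gives rise to a bounded operator $L_A\in\mathcal{B}(X,Y)$, I would invoke the closed graph theorem. Linearity of $L_A$ is immediate from the definition $L_A(x)=Ax$, so the real content is continuity. Suppose $x^{(m)}\to x$ in $X$ and $L_A(x^{(m)})\to y$ in $Y$. Because $X$ and $Y$ are BK spaces, convergence in each implies coordinatewise convergence, so $(L_A(x^{(m)}))_n\to y_n$ for each $n$. On the other hand, since $A_n\in X^{\beta}$ the functional $z\mapsto A_n(z)=\sum_k a_{nk}z_k$ is a continuous coordinate functional on the BK space $X$ (this is a standard consequence of $X$ being BK, e.g.\ Wilansky), so $A_n(x^{(m)})\to A_n(x)$. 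Comparing the two limits yields $y_n=A_n(x)$ for every $n$, i.e.\ $y=Ax=L_A(x)$. Thus $L_A$ has closed graph and is bounded. This closed-graph step is the main technical point; everything else really rests on it.

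For the second assertion, once part 1 is in hand, the computation is direct. If $A\in(X,\ell_\infty)$ then for $\|x\|=1$,
\begin{align*}
\|L_A(x)\|_\infty=\sup_{n}|A_n(x)|\le\sup_{n}\|A_n\|^{*},
\end{align*}
so $\|L_A\|\le\sup_n\|A_n\|^{*}$; conversely, for each fixed $n$, $\|A_n\|^{*}=\sup_{\|x\|=1}|A_n(x)|\le\sup_{\|x\|=1}\|Ax\|_\infty=\|L_A\|$, giving the reverse inequality and hence the equality $\|L_A\|=\sup_n\|A_n\|^{*}$. The ``iff'' then follows: membership in $(X,\ell_\infty)$ forces $A_n\in X^\beta$ for each $n$ (so that $\|A_n\|^{*}$ is defined) and makes $\sup_n\|A_n\|^{*}<\infty$ by part 1; conversely, if each $A_n\in X^\beta$ and $\sup_n\|A_n\|^{*}$ is finite, then $|A_n(x)|\le\|A_n\|^{*}\|x\|$ uniformly in $n$ shows $Ax\in\ell_\infty$, so $A\in(X,\ell_\infty)$.

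For the third assertion, one direction is trivial: if $A\in(X,Y_1)$ then $A\in(X,Y)$ because $Y_1\subset Y$, and $A(b^{(k)})\in Y_1$ because $b^{(k)}\in X$. For the converse, take any $x\in X$ and expand it in the basis, $x=\sum_{k=0}^{\infty}\lambda_k b^{(k)}$, so that the partial sums $x^{[n]}=\sum_{k=0}^{n}\lambda_k b^{(k)}$ converge to $x$ in $X$. By part 1, $L_A$ is continuous, hence $L_A(x^{[n]})\to L_A(x)=Ax$ in $Y$. Each $L_A(x^{[n]})=\sum_{k=0}^{n}\lambda_k A(b^{(k)})$ is a finite linear combination of vectors in $Y_1$, hence lies in $Y_1$, and since $Y_1$ is closed in $Y$ the limit $Ax$ also lies in $Y_1$. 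Thus $A\in(X,Y_1)$. This completes the three parts, with the closed-graph argument in part 1 being the step that ties the abstract BK structure to the concrete matrix viewpoint.
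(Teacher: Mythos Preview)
The paper does not prove this proposition at all; it is stated with the citation ``(cf.\ \cite{mal98}, Theorem 7)'' and used as a known result, so there is no argument in the paper to compare against. Your proof is correct and follows the standard route one finds in the FK/BK literature: the closed graph theorem for part one, the direct norm computation for part two, and the basis-expansion-plus-closedness argument for part three. One small remark: in the third assertion the target spaces $Y$ and $Y_1$ are only assumed to be FK (not BK), so when you invoke ``by part 1, $L_A$ is continuous'' you are implicitly using that the closed-graph argument of part one goes through verbatim for FK targets, since FK spaces are Fr\'echet and the coordinate maps are still continuous; this is true, but worth making explicit.
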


\begin{proposition}\label{P4} (cf. \cite{mal99}, Proposition 3.4) Let $T$ be a triangle
\begin{enumerate}
\item[(i)] If $X \text{  and  } Y$ are subsets of $\omega$, then $A\in (X,Y_T)$ if and only if $B=TA\in (X,Y)$.
\item[(ii)] If $X \text{  and  } Y$ are BK spaces and $A\in (X,Y_T)$, then 
\begin{align*}
\|L_A\|=\|L_B\|
\end{align*} 
\end{enumerate} 
\end{proposition}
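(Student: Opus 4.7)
The plan is to reduce both statements to the associativity identity $(TA)(x) = T(Ax)$, which holds whenever $Ax$ is defined and which is essentially automatic because $T$ is a triangle. For part~(i), since $T$ is a triangle the entry $(TA)_{nj} = \sum_{k=0}^{n} t_{nk}\,a_{kj}$ is a \emph{finite} sum, so $B = TA$ is well-defined entrywise regardless of $A$. If $A \in (X, Y_T)$, then $A_n \in X^{\beta}$ for each $n$, and for any $x \in X$ interchanging the finite $k$-sum with the $j$-sum gives $B_n(x) = \sum_{k=0}^{n} t_{nk}\,A_k(x) = T_n(Ax)$; hence $Bx = T(Ax) \in Y$, i.e.\ $B \in (X, Y)$.

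For the converse direction of~(i) one first needs $A_n \in X^{\beta}$ for every $n$ just to make sense of $Ax$. The triangular structure of $T$ supplies this inductively: $B_0 = t_{00} A_0$ together with $t_{00}\neq 0$ forces $A_0 \in X^{\beta}$, and once $A_0,\dots,A_{n-1}$ are known to lie in $X^{\beta}$, the identity $A_n(x) = \tfrac{1}{t_{nn}}\bigl(B_n(x) - \sum_{k=0}^{n-1} t_{nk}\,A_k(x)\bigr)$ yields $A_n \in X^{\beta}$. With $Ax$ now defined for every $x\in X$, the relation $T(Ax) = Bx \in Y$ reads exactly as $Ax \in Y_T$, so $A\in (X, Y_T)$.

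For part~(ii), I would invoke the standard fact (Theorem 4.3.12 of Wilansky, already cited in the excerpt) that $Y_T$ is a BK space under the norm $\|y\|_{Y_T} = \|Ty\|_Y$ when $T$ is a triangle and $Y$ is a BK space. A one-line calculation then finishes the argument: for any $x\in X$,
$$\|L_A(x)\|_{Y_T} \;=\; \|Ax\|_{Y_T} \;=\; \|T(Ax)\|_Y \;=\; \|Bx\|_Y \;=\; \|L_B(x)\|_Y,$$
using part~(i) in the middle step; taking the supremum over $\|x\|_X = 1$ yields $\|L_A\|=\|L_B\|$.

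The only step requiring any real care is the inductive verification that $A_n\in X^{\beta}$ in the converse of~(i); I expect this to be the main (if minor) obstacle, but it is immediate from the invertibility of $t_{nn}$. Everything else amounts to unpacking the definitions of matrix domain, matrix class $(X,Y)$, and the BK-norm on $Y_T$.
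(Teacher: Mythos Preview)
Your argument is correct and is the standard one. Note, however, that the paper does not actually supply a proof of this proposition: it is stated with the citation ``cf.\ \cite{mal99}, Proposition~3.4'' and used as a known tool, so there is no in-paper proof to compare against. Your write-up recovers precisely the proof one finds in the cited literature --- the associativity $(TA)x = T(Ax)$ via finiteness of the rows of $T$, the inductive recovery of $A_n\in X^{\beta}$ from $B_n\in X^{\beta}$ using $t_{nn}\neq 0$, and the identification $\|\,\cdot\,\|_{Y_T}=\|T(\,\cdot\,)\|_Y$ from Wilansky's Theorem~4.3.12 for part~(ii).
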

Using Proposition \ref{P3} and Theorem \ref{T3} we conclude the following corollary:
\begin{corollary}\label{C1}
Let $\left(p_k\right)_{k=0}^{\infty},\left(q_k\right)_{k=0}^{\infty}$ be given positive sequences, and $R_n=\sum_{k=0}^{n}p_{n-k} q_k$ then 
\begin{enumerate}
\item[i)] $A\in \left(\left(N_{p}^{q} \right)_\infty, \ell_\infty\right)$  if and only if 
\begin{equation}\label{Co1i}
\sup_{n,m}\left\{\sum_{k=0}^{m} R_{k}\left|\sum_{j=k}^{m} \left(-1\right)^{j-k}{H_{j-k}^{(p)}\over q_{j}}a_{nj}\right|\right\}<\infty
\end{equation}
 and 
\begin{equation}\label{Co1ii}
{A_n H_{n}^{(p)}R\over q} \in c_0~~~\forall~n=0,1,\ldots
\end{equation}
\item[ii)] $A\in \left(\left(\bar{N}_{p}^{q} \right),\ell_\infty\right)$  if and only if condition \eqref{Co1i} holds and 
\begin{equation}\label{Co2}
{A_n H_{n}^{(p)}R\over q} \in c~~~~~~~\forall~n=0,1,2,\ldots
\end{equation}
\item[iii)] $A\in \left(\left(\bar{N}_{p}^{q} \right)_0, \ell_\infty\right)$  if and only if condition \eqref{Co1i} holds.
\item[iv)] $A\in \left(\left(\bar{N}_{p}^{q} \right)_0, c_0\right)$  if and only if condition \eqref{Co1i} holds and
\begin{equation}\label{Co3}
\lim_{n\rightarrow\infty} a_{nk}=0~~~~~\text{ for all  } k=0,1,2\ldots
\end{equation}
\item[v)] $A\in \left(\left(\bar{N}_{p}^{q} \right)_0, c\right)$  if and only if condition \eqref{Co1i} holds and
\begin{equation}\label{Co4}
\lim_{n\rightarrow\infty} a_{nk}=\alpha_k~~~~~\text{ for all  } k=0,1,2\ldots
\end{equation}
\item[vi)] $A\in \left(\left(\bar{N}_{p}^{q} \right), c_0\right)$  if and only if conditions \eqref{Co1i}, \eqref{Co1ii} and \eqref{Co3} holds and
\begin{equation}\label{Co5}
\lim_{n\rightarrow\infty}\sum_{k=0}^{\infty} a_{nk}=0~~~~~\text{ for all  } k=0,1,2\ldots
\end{equation}
\item[vii)] $A\in \left(\left(\bar{N}_{p}^{q} \right), c\right)$  if and only if conditions \eqref{Co1i}, \eqref{Co1ii} and \eqref{Co4} holds and
\begin{equation}\label{Co6}
\lim_{n\rightarrow\infty}\sum_{k=0}^{\infty} a_{nk}=\alpha~~~~~\text{ for all  } k=0,1,2\ldots
\end{equation}
\end{enumerate}
\end{corollary}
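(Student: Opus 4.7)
The plan is to combine the norm formula from Theorem \ref{T3} with Proposition \ref{P3} and the basis/AK information in Theorem \ref{BKspace}. In each of the seven cases one must verify two things: that every row $A_n$ lies in the $\beta$-dual of the domain space (so that the series $A_n(x)=\sum_{k}a_{nk}x_k$ converges for each admissible $x$), and that the sequence $(A_n(x))_{n\in\mathbb{N}}$ belongs to the prescribed target. The first requirement is supplied by Theorem \ref{T2}; the second is reduced either to a uniform norm bound (for an $\ell_\infty$-target) or to testing against a basis (for a $c_0$- or $c$-target).

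For parts (i)--(iii), where the target is $\ell_\infty$, Proposition \ref{P3} yields $A\in(X,\ell_\infty)$ iff $A_n\in X^\beta$ for every $n$ and $\sup_n\|A_n\|^*<\infty$. Applying Theorem \ref{T3} with $a=A_n$, and absorbing the inner supremum over the truncation parameter into an outer $\sup_{n,m}$, rewrites this boundedness as precisely \eqref{Co1i}. When the domain is $(\bar{N}_p^q)_0$ (part (iii)), \eqref{Co1i} already forces each $A_n$ into $c_1\cap c_2=[(\bar{N}_p^q)_0]^\beta$, so no auxiliary hypothesis is needed. For the domains $(\bar{N}_p^q)_\infty$ and $(\bar{N}_p^q)$, Theorem \ref{T2} provides the strictly larger dual descriptions $c_2\cap c_3$ and $c_1\cap c_2\cap c_4$; unpacking the matrix $C$ from \eqref{E15} with $a_j=a_{nj}$ translates these into the shorthand conditions \eqref{Co1ii} and \eqref{Co2}.

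For parts (iv)--(vii), where the target $Y_1$ is $c_0$ or $c$, the basis clause of Proposition \ref{P3} reduces matters to the $\ell_\infty$-case together with the requirement that $A(b^{(k)})\in Y_1$ for every element $b^{(k)}$ of a basis of the domain. By Theorem \ref{BKspace}, $(\bar{N}_p^q)_0$ has the AK basis $(e^{(k)})_{k\in\mathbb{N}}$, and since $A(e^{(k)})=(a_{nk})_{n\in\mathbb{N}}$, demanding this sequence lie in $c_0$ or $c$ produces \eqref{Co3} and \eqref{Co4}. For $(\bar{N}_p^q)$ the representation \eqref{E13} forces us to test additionally on the unit sequence $e$; since $A(e)=\bigl(\sum_{k}a_{nk}\bigr)_{n\in\mathbb{N}}$, the requirements $A(e)\in c_0$ and $A(e)\in c$ give \eqref{Co5} and \eqref{Co6}.

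The only genuinely delicate point is the identification in part (i) of the shorthand \eqref{Co1ii} with membership of each row in $c_2\cap c_3$, since condition $c_3$ is an equality of the limit of $\ell_1$-norms with the $\ell_1$-norm of the limit rather than a mere boundedness. Once this row-wise identification is granted, all seven assertions drop out of Proposition \ref{P3} and Theorem \ref{T3}; the remaining work is a systematic matching of the $\beta$-dual formulas already established in Theorem \ref{T2}.
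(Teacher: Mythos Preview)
Your proposal is correct and follows exactly the approach the paper intends: the paper offers no explicit proof of Corollary~\ref{C1}, merely stating that it follows from Proposition~\ref{P3} and Theorem~\ref{T3}, and your elaboration supplies precisely the missing details by combining these with the $\beta$-dual descriptions of Theorem~\ref{T2} and the basis information of Theorem~\ref{BKspace}. Your honest flagging of the identification in part~(i) between the shorthand~\eqref{Co1ii} and the condition $c_2\cap c_3$ is appropriate, since the paper's notation there is itself opaque and no argument for that identification is given in the paper either.
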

From theorem \ref{BKspace},\ref{T3} and Proposition \ref{P4} we conclude the following corollary 
\begin{corollary}
Let $X$ be a BK-space and $\left(p_k\right)_{k=0}^{\infty},\left(q_k\right)_{k=0}^{\infty}$ be positive sequences, $R_n=\sum_{k=0}^{n}p_{n-k} q_k$ then 
\item[i)] $A\in \left(X,\left(\bar{N}_{p}^{q} \right)_\infty\right)$  if and only if 
\begin{equation}\label{Co7}
\sup_{m}\left\|{1\over R_m}\sum_{n=0}^{m}p_{m-n} q_nA_n \right\|^*<\infty
\end{equation}
\item[ii)]  $A\in \left(X,\left(\bar{N}_{p}^{q} \right)_0\right)$  if and only if \eqref{Co7} holds and 
\begin{equation}\label{Co8}
\lim_{m\rightarrow\infty} \left({1\over R_m}\sum_{n=0}^{m}p_{m-n} q_nA_n \left(c^{(k)}\right)\right)=0 ~~~\forall~k=0,1,2\ldots
\end{equation} 
where $\left(c^{(k)}\right)$ is a basis of $X$.
\item[iii)] $A\in \left(X,\left(\bar{N}_{p}^{q} \right)\right)$  if and only if \eqref{Co8} holds and 
\begin{equation}\label{Co9}
\lim_{m\rightarrow\infty} \left({1\over R_m}\sum_{n=0}^{m}p_{m-n} q_nA_n \left(c^{(k)}\right)\right)=\alpha_k ~~~\forall~k=0,1,2\ldots
\end{equation}
\end{corollary}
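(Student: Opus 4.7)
The plan is to reduce everything to the already-proved characterizations of matrices mapping $X$ into $\ell_\infty$, $c_0$ and $c$ by invoking Proposition~\ref{P4}(i). I would set $T=\bar{N}_{p}^{q}$ and $B=TA$. A direct computation from~\eqref{E11} shows that the $m$th row of $B$ is
$$B_m=\frac{1}{R_m}\sum_{n=0}^{m}p_{m-n}q_n A_n,$$
so Proposition~\ref{P4}(i) translates the three assertions $A\in\bigl(X,(\bar{N}_{p}^{q})_\infty\bigr)$, $A\in\bigl(X,(\bar{N}_{p}^{q})_0\bigr)$, $A\in\bigl(X,(\bar{N}_{p}^{q})\bigr)$ into $B\in(X,\ell_\infty)$, $B\in(X,c_0)$, $B\in(X,c)$, respectively.

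For (i), since $X$ and $\ell_\infty$ are BK spaces, the first part of Proposition~\ref{P3} gives $B\in(X,\ell_\infty)$ if and only if $\|L_B\|=\sup_m\|B_m\|^{*}<\infty$, which is exactly \eqref{Co7}. For (ii) and (iii), I would use the last clause of Proposition~\ref{P3}: $c_0$ and $c$ are closed subspaces of the FK space $\ell_\infty$, and $X$ carries the Schauder basis $(c^{(k)})$ by hypothesis, so $B\in(X,c_0)$ (respectively $B\in(X,c)$) if and only if $B\in(X,\ell_\infty)$ and $B(c^{(k)})\in c_0$ (respectively $\in c$) for each $k$. The $m$th coordinate of $B(c^{(k)})$ works out to
$$B_m(c^{(k)})=\frac{1}{R_m}\sum_{n=0}^{m}p_{m-n}q_n A_n(c^{(k)}),$$
so $B(c^{(k)})\in c_0$ becomes precisely \eqref{Co8} and $B(c^{(k)})\in c$ becomes precisely \eqref{Co9}. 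Combining these with \eqref{Co7} yields (ii) and (iii).

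No serious obstacle is expected: once Propositions~\ref{P3} and~\ref{P4} are in hand, the corollary is essentially a change of variable through the triangle $\bar{N}_{p}^{q}$. The only point that deserves a sentence of care is the derivation of the displayed expression for $B_m(c^{(k)})$; this requires interchanging the finite outer sum over $n$ with the (convergent) inner evaluation $A_n(c^{(k)})$, which is justified by $A_n\in X^{\beta}$.
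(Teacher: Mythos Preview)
Your argument is correct and follows exactly the route the paper indicates (it offers no written proof, only the line ``From Theorem~\ref{BKspace}, \ref{T3} and Proposition~\ref{P4} we conclude\ldots''); you have simply made the reduction $B=\bar{N}_{p}^{q}A$ and the application of Proposition~\ref{P3} explicit. Note also that your derivation yields \eqref{Co7} together with \eqref{Co9} for part~(iii), which is the intended statement---the reference to \eqref{Co8} in (iii) is evidently a typographical slip in the paper.
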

\section{Hausdorff Measure of Noncompactness}
Let $S$ and $M$ be the subsets of a metric space $(X,d)$ and $\epsilon>0$. Then $S$ is called an $\epsilon-$net of $M$ in $X$ if for every $x\in M$ there exists $s\in S$ such that $d(x,s)<\epsilon$. Further, if the set $S$ is finite, then the $\epsilon-$net $S$ of $M$ is called {\it finite $\epsilon-$net} of $M$. A subset of a metric space is said to be {\it totally bounded} if it has a finite $\epsilon-$net for every $\epsilon>0$ \cite{mur11}.\\
If $\mathcal{M}_X$ denotes the collection of all bounded subsets of metric space $(X,d)$. If $Q\in \mathcal{M}_X$ then the \emph{Hausdorff Measure of Noncompactness} of the set $Q$ is defined by 
$$\chi(Q)=\inf \left\{\epsilon>0: Q \text{  has a finite } \epsilon-\text{net in  }X \right\} $$
 The function $\chi:\mathcal{M}_X\rightarrow [0,\infty)$ is called \emph{Hausdorff Measure of Noncompactness} \cite{Josaf}\\
The basic properties of \emph{Hausdorff Measure of Noncompactness} can be found in (\cite{M1-b}, \cite{malkowsky1}, \cite{Josaf}).\\
Some of those properties are\\
If $Q,Q_1$ and $Q_2$ are bounded subsets of a metric space $(X,d),$ then 
\begin{align*}
\chi(Q)&=0 \Leftrightarrow Q~\text{ is totally bounded set,}\\
\chi(Q)&=\chi(\bar{Q}),\\
Q_1\subset Q_2 &\Rightarrow \chi(Q_1)\leq \chi(Q_2),\\
\chi(Q_1\cup Q_2)&=\max\left\{ \chi(Q_1),\chi(Q_2)\right\},\\
\chi(Q_1\cap Q_2)&=\min\left\{ \chi(Q_1),\chi(Q_2)\right\}.
\end{align*}
Further if $X$ is a normed space then \emph{Hausdorff Measure of Noncompactness} $\chi$ has the following additional properties connected with the linear structure. \\
\begin{align*}
\chi(Q_1+ Q_2)&\leq \chi(Q_1)+\chi(Q_2)\\
\chi(\eta Q)&=|\eta|\chi(Q)~~~~~~~~~~~~~~~~\eta \in \mathbb{C}
\end{align*}
The most effective way of characterizing operators between Banach Spaces is by applying Hausdorff Measure of Noncompactness. If $X$ and $Y$ are Banach spaces, and $L\in \mathcal{B}(X,Y)$, then the Hausdorff Measure of Noncompactness of $L$, denoted by $\|L\|_\chi$ is defined as  
$$\|L\|_\chi=\chi\left(L(S_X)\right)$$
Where $S_X=\{x\in X:\|x\|=1\}$ is the unit ball in $X$. \\
From (\cite{Al-Em03}, Corollary 1.15) we know that 
$$ L ~\text{ is compact if and only if }  ~\|L\|_\chi=0 $$.
\begin{proposition} \label{P11} (\cite{Josaf}, Theorem 6.1.1, $X=c_0$)
Let $Q\in M_{c_0}$ and $P_r:c_0\rightarrow c_0 ~~(r\in \mathbb{N}$ be the operator defined by  $P_r(x)=(x_0,x_1,\ldots, x_r, 0,0,\ldots)$ for all $x=(x_k)\in c_0$. Then, we have 
$$\chi(Q)=\lim_{r\rightarrow \infty}\left(\sup_{x\in Q}\|(I-P_r)(x)\| \right)$$
where $I$ is the identity operator on $c_0$. 
\end{proposition}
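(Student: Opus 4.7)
Write $d_r(Q)=\sup_{x\in Q}\|(I-P_r)(x)\|_\infty$ and note that for $x\in c_0$ one has $\|(I-P_r)(x)\|_\infty=\sup_{k>r}|x_k|$, which is a non-increasing function of $r$. Taking the sup over $x\in Q$ preserves monotonicity, so the sequence $(d_r(Q))_{r\in\mathbb{N}}$ is non-increasing and bounded below by $0$; hence the limit $d(Q):=\lim_{r\to\infty}d_r(Q)$ exists. It then remains to prove the two inequalities $\chi(Q)\leq d(Q)$ and $d(Q)\leq\chi(Q)$.

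\textbf{Upper bound $\chi(Q)\leq d(Q)$.} Fix $\varepsilon>0$ and choose $r$ large enough that $d_r(Q)<d(Q)+\varepsilon/2$. The image $P_r(Q)$ lies in the finite-dimensional subspace spanned by $e^{(0)},\dots,e^{(r)}$, and it is bounded (since $\|P_r\|\leq 1$ on $c_0$), hence totally bounded. Pick a finite $\varepsilon/2$-net $\{y_1,\dots,y_m\}$ for $P_r(Q)$. For any $x\in Q$, choose $y_i$ with $\|P_r(x)-y_i\|_\infty<\varepsilon/2$; then
$$\|x-y_i\|_\infty\leq \|x-P_r(x)\|_\infty+\|P_r(x)-y_i\|_\infty<d_r(Q)+\varepsilon/2<d(Q)+\varepsilon,$$
so $\{y_1,\dots,y_m\}$ is a finite $(d(Q)+\varepsilon)$-net for $Q$, giving $\chi(Q)\leq d(Q)+\varepsilon$. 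Let $\varepsilon\to 0$.

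\textbf{Lower bound $d(Q)\leq\chi(Q)$.} Let $\varepsilon>\chi(Q)$. Then $Q$ admits a finite $\varepsilon$-net $\{y_1,\dots,y_m\}\subset c_0$. Observe that $\|(I-P_r)(z)\|_\infty\leq\|z\|_\infty$ for every $z\in c_0$. For each $x\in Q$, pick $y_i$ with $\|x-y_i\|_\infty<\varepsilon$; the triangle inequality yields
$$\|(I-P_r)(x)\|_\infty\leq\|(I-P_r)(x-y_i)\|_\infty+\|(I-P_r)(y_i)\|_\infty<\varepsilon+\max_{1\leq j\leq m}\|(I-P_r)(y_j)\|_\infty.$$
Taking the supremum over $x\in Q$ and then letting $r\to\infty$, the second summand tends to $0$ because each $y_j\in c_0$ (and only finitely many of them appear), so $d(Q)\leq\varepsilon$. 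Since $\varepsilon>\chi(Q)$ was arbitrary, $d(Q)\leq\chi(Q)$.

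\textbf{Main obstacle.} Both directions are short once the right tools are identified. The only subtle point is the lower bound, where one must exploit simultaneously that the net is \emph{finite} (so the maximum of finitely many tails $\|(I-P_r)(y_j)\|_\infty$ still vanishes as $r\to\infty$) and that each $y_j$ genuinely lies in $c_0$; if one tried the same argument over $\ell_\infty$, this step would fail, which is precisely why the statement is restricted to $c_0$.
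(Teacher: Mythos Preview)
Your argument is correct and is the standard proof of this classical fact. Note, however, that the paper does not actually prove Proposition~\ref{P11}: it is quoted as a known result with a citation to \cite{Josaf}, Theorem~6.1.1, and no proof is given in the paper itself. So there is nothing to compare your approach against; you have simply supplied the (correct) textbook argument that the authors chose to omit.
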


\begin{proposition} \label{P12} (cf. \cite{Josaf}, Theorem 6.1.1)
Let $X$ be a Banach space with a Schauder basis $\{e_1,e_2,\ldots \}$, and $Q\in M_{X}$ and $P_n:X\rightarrow X ~~(n\in \mathbb{N}$ be the projector onto the linear span of  $\{e_1,e_2,\ldots , e_n\}$. Then, we have 

\begin{align*}
{1\over a}\lim_{n\rightarrow \infty}\sup &\left(\sup_{x\in Q}\|(I-P_n)(x)\| \right) \leq \chi(Q)\\
&\leq \inf_{n}\left(\sup_{x\in Q}\|(I-P_n)(x)\| \right)\leq 
\lim_{n\rightarrow \infty}\sup \left(\sup_{x\in Q}\|(I-P_n)(x)\| \right)
\end{align*}

where  $a=\lim_{n\rightarrow \infty}\sup \|I-P_n\|$, and $I$ is the identity operator on $c$.\\
If $X=c$ then $a=2$. (see \cite{Josaf}, p.22).
\end{proposition}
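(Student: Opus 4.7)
Write $d_n(Q) = \sup_{x \in Q} \|(I - P_n)(x)\|$ for brevity; the statement is a chain of three inequalities relating $\chi(Q)$ to $\inf_n d_n(Q)$ and $\limsup_n d_n(Q)$. The rightmost inequality $\inf_n d_n(Q) \leq \limsup_n d_n(Q)$ is automatic, since the infimum of any sequence lies below its limit superior. So the real content is the two sandwich bounds around $\chi(Q)$.

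For the upper bound $\chi(Q) \leq \inf_n d_n(Q)$, I would fix $n$ and $\epsilon > 0$. The image $P_n(Q)$ is a bounded subset of the finite-dimensional subspace $\mathrm{span}\{e_1, \ldots, e_n\}$, hence totally bounded, so it admits a finite $\epsilon$-net $S$. For any $x \in Q$ pick $s \in S$ with $\|P_n(x) - s\| < \epsilon$; then $\|x - s\| \leq \|(I - P_n)(x)\| + \|P_n(x) - s\| < d_n(Q) + \epsilon$. Thus $S$ is a finite $(d_n(Q) + \epsilon)$-net for $Q$, forcing $\chi(Q) \leq d_n(Q) + \epsilon$. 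Letting $\epsilon \to 0$ and then infimizing over $n$ closes this step.

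The main obstacle is the lower bound $\tfrac{1}{a}\limsup_n d_n(Q) \leq \chi(Q)$. Given $\epsilon > 0$, I would fix a finite $(\chi(Q) + \epsilon)$-net $S = \{s_1, \ldots, s_m\}$ for $Q$. For any $x \in Q$, choose $s = s(x) \in S$ with $\|x - s\| \leq \chi(Q) + \epsilon$, decompose $(I - P_n)(x) = (I - P_n)(x - s) + (I - P_n)(s)$, and apply the triangle inequality to obtain
\[
\|(I - P_n)(x)\| \leq \|I - P_n\| \cdot (\chi(Q) + \epsilon) + \max_{1 \leq i \leq m} \|(I - P_n)(s_i)\|.
\]
Taking $\sup_{x \in Q}$ preserves this bound. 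The delicate step is then to take $\limsup_{n \to \infty}$: by definition $\limsup_n \|I - P_n\| = a$, and for each fixed $s_i$ the Schauder basis expansion gives $P_n(s_i) \to s_i$, so $\|(I - P_n)(s_i)\| \to 0$; since the maximum runs over a finite set, it too tends to $0$. The trick here is that replacing the $x$-dependent error $\|(I - P_n)(s(x))\|$ by the uniform maximum over the net is what lets us use basis convergence at all. Letting $\epsilon \to 0$ finishes the bound.

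For the specific case $X = c$, the canonical Schauder basis is $\{e, e^{(0)}, e^{(1)}, \ldots\}$ with $e = (1, 1, \ldots)$, and any $x = (x_k) \in c$ with limit $\ell$ expands as $x = \ell e + \sum_{k} (x_k - \ell) e^{(k)}$. A direct coordinatewise computation shows that $(I - P_n)(x)$ vanishes on the initial indices covered by $P_n$ and equals $x_k - \ell$ beyond, so $\|(I - P_n)(x)\|_\infty = \sup_{k \geq N_n} |x_k - \ell| \leq 2 \|x\|_\infty$. Equality is attained asymptotically by sequences of the form $(-1, -1, \ldots, -1, 1, 1, \ldots)$ whose sign flip lies beyond the projection cutoff, giving $\|I - P_n\| = 2$ for every $n$ and hence $a = 2$.
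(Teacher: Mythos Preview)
The paper does not prove this proposition at all: it is quoted verbatim from the cited reference (Theorem~6.1.1 in \cite{Josaf}) and used as a black box in the proof of Theorem~\ref{T4}. There is therefore no ``paper's own proof'' to compare against.

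That said, your argument is correct and is essentially the standard one. The upper bound via a finite $\epsilon$-net of the totally bounded set $P_n(Q)$ in a finite-dimensional subspace is exactly right, as is the lower bound obtained by pushing $(I-P_n)$ through a finite net of $Q$ and using that the basis tails $\|(I-P_n)(s_i)\|$ vanish uniformly over the finite net. Your observation that $\inf_n d_n(Q)\le\liminf_n d_n(Q)\le\limsup_n d_n(Q)$ disposes of the rightmost inequality. The computation of $a=2$ for $X=c$ via the canonical basis $\{e,e^{(0)},e^{(1)},\ldots\}$ and the extremal sequences $(-1,\ldots,-1,1,1,\ldots)$ is also the standard route. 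In short: your proof supplies what the paper merely cites, and it does so correctly.
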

   
\section{Compact operators on the spaces $\left(\bar{N}_{p}^{q} \right)_0$, $\left(\bar{N}_{p}^{q} \right)$ and $\left(\bar{N}_{p}^{q} \right)_\infty$}
\begin{theorem}\label{T4}
Consider the matrix $A$ as in Corollary \ref{C1}, and for any integers n,s, $n>s$ set 
\begin{equation}\label{E1}
\|A\|^{(s)}=\sup_{n>s}\sup_{m}\left\{\sum_{k=0}^{m} R_{k}\left|\sum_{j=k}^{m} \left(-1\right)^{j-k}{H_{j-k}^{(p)}\over q_{j}}a_{nj}\right|\right\}
\end{equation} 
If $X$ be either $\left(\bar{N}_{p}^{q} \right)_0$ or $\left(\bar{N}_{p}^{q} \right)$ and $A\in (X,c_0) $. Then 
\begin{equation}\label{E2}
\|L_A\|_{\chi}=\lim_{s \rightarrow\infty} \|A\|^{(s)}.
\end{equation} 
If $X$ be either $\left(\bar{N}_{p}^{q} \right)_0$ or $\left(\bar{N}_{p}^{q} \right)$ and $A\in (X,c) $. Then 
\begin{equation}\label{E3}
{1\over 2}\cdot \lim_{s\rightarrow\infty} \|A\|^{(s)}\leq \|L_A\|_{\chi}\leq \lim_{r\rightarrow\infty} \|A\|^{(s)}.
\end{equation}
and if $X$ be either $\left(\bar{N}_{p}^{q} \right)_0$ , $\left(\bar{N}_{p}^{q} \right)$ or $\left(\bar{N}_{p}^{q} \right)_\infty$ and $A\in (X,\ell_\infty) $. Then 
\begin{equation}
0\leq \|L_A\|_{\chi}\leq \lim_{s\rightarrow\infty} \|A\|^{(s)}.
\end{equation}
\end{theorem}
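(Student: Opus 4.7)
The plan is to express $\|L_A\|_\chi=\chi(L_A(S_X))$ in each case via Propositions \ref{P11} and \ref{P12}, and then identify the resulting tail quantities with $\|A\|^{(s)}$ using the explicit dual-norm formula of Theorem \ref{T3}. The uniform observation used in all three cases is that
\[
\sup_{x\in S_X}\sup_{n>s}|A_n(x)|\;=\;\sup_{n>s}\|A_n\|^*\;=\;\|A\|^{(s)},
\]
because Theorem \ref{T3} evaluates $\|A_n\|^*$ explicitly and $\sup_{x\in S_X}|\phi(x)|=\|\phi\|^*$ for any bounded linear functional $\phi$ on a BK space containing the unit sphere $S_X$.

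For $A\in(X,c_0)$, one has $Q=L_A(S_X)\subset c_0$ and Proposition \ref{P11} applies directly: $I-P_r$ is the truncation keeping only coordinates with index $>r$, so $\|(I-P_r)Ax\|_\infty=\sup_{n>r}|A_n(x)|$; taking $\sup_{x\in S_X}$ and letting $r\to\infty$ yields \eqref{E2}. For $A\in(X,\ell_\infty)$, and also for the upper bound in the $c$-case, $\ell_\infty$ has no Schauder basis, so I would argue by a direct net construction: for each $r$, $P_r(Q)$ is a bounded subset of the finite-dimensional space $\mathrm{span}\{e^{(0)},\ldots,e^{(r)}\}$, hence admits, for any $\eta>0$, a finite $\eta$-net $\{z_1,\ldots,z_N\}$. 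For $y=Ax\in Q$ pick $z_i$ with $\|P_ry-z_i\|_\infty<\eta$; since $z_i$ is supported on $\{0,\ldots,r\}$, disjointness of supports gives $\|y-z_i\|_\infty\leq\max(\eta,\|(I-P_r)Ax\|_\infty)\leq\max(\eta,\|A\|^{(r)})$, so $\chi(Q)\leq\|A\|^{(r)}$ for every $r$. Letting $r\to\infty$ produces both the upper bound in \eqref{E4} and the upper bound in \eqref{E3}, while the trivial $\chi\geq 0$ completes \eqref{E4}.

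For the lower half of \eqref{E3}, with $A\in(X,c)$ I would invoke Proposition \ref{P12} applied to the Schauder basis $\{e,e^{(0)},e^{(1)},\ldots\}$ of $c$, for which $a=2$. Writing $\ell(x)=\lim_k A_k(x)$, one has $\|(I-P_n)Ax\|_\infty=\sup_{k>n}|A_k(x)-\ell(x)|$ and, after exchanging suprema, $\sup_{x\in S_X}\|(I-P_n)Ax\|_\infty=\sup_{k>n}\|A_k-\ell\|^*$. The reverse triangle inequality then gives $\|A_k-\ell\|^*\geq\|A_k\|^*-\|\ell\|^*$; combined with the bound $|\ell(x)|=\lim_k|A_k(x)|\leq\sup_{j>n}|A_j(x)|$, which ensures $\|\ell\|^*\leq\lim_s\|A\|^{(s)}$, this feeds into the factor $1/a=1/2$ of Proposition \ref{P12} to deliver the lower bound in \eqref{E3}.

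The main obstacle is precisely this $c$-case comparison: the natural projection tail depends on $A_k-\ell$ rather than on $A_k$ alone, and the functional $\ell$ cannot in general be made to vanish, so a constant must be paid when replacing $\sup_{k>n}\|A_k-\ell\|^*$ by $\|A\|^{(n)}$. Arranging that this loss is absorbed by the $1/2$ coming from $a=2$ in Proposition \ref{P12}, so that the two appearances compound to give exactly the asymmetric constant $1/2$ on the left of \eqref{E3}, is the most delicate step in the argument.
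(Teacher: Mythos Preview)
Your $c_0$ and $\ell_\infty$ arguments coincide with the paper's. For $c_0$ you both apply Proposition~\ref{P11} and identify $\sup_{x\in F}\|(I-P_s)Ax\|_\infty$ with $\|A\|^{(s)}$ via Theorem~\ref{T3}; for $\ell_\infty$ the paper writes $AF\subset P_s(AF)+(I-P_s)(AF)$ and uses $\chi(P_s(AF))=0$ together with subadditivity, which is exactly what your explicit finite-net construction unwinds. Since your net lives in $\phi\subset c$, it also supplies the upper bound in the $c$ case, as you note.

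The $c$-case lower bound is where you and the paper diverge. The paper does \emph{not} work through the limit functional $\ell$: it simply reuses the identity~\eqref{E5}, $\|A\|^{(s)}=\sup_{x\in F}\|(I-P_s)Ax\|$, with the $c$-projectors $P_s(x)=le+\sum_{k\le s}(x_k-l)e^{(k)}$, and then reads off~\eqref{E3} from Proposition~\ref{P12} with $a=2$. The factor $1/2$ comes solely from $a=2$; there is no second loss to absorb. You, by contrast, correctly observe that with these projectors $\|(I-P_s)Ax\|_\infty=\sup_{k>s}|A_k(x)-\ell(x)|$, so the quantity in Proposition~\ref{P12} is $\sup_{k>s}\|A_k-\ell\|^*$ rather than $\|A\|^{(s)}$, and you try to bridge the gap by the reverse triangle inequality together with $\|\ell\|^*\le\lim_s\|A\|^{(s)}$. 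That route does not close: taking $s\to\infty$ in $\|A_k-\ell\|^*\ge\|A_k\|^*-\|\ell\|^*$ makes the right-hand side collapse to $0$, and if you carry the $\ell$-loss multiplicatively it compounds with the $1/2$ from $a=2$ to give at best $1/4$. So the ``most delicate step'' you flag is a genuine gap in your sketch; the paper's proof avoids it by asserting~\eqref{E5} directly for the $c$-projectors rather than attempting your $\ell$-shift comparison.
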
 
\begin{proof}
Let $F=\{x\in X:\|x\|\leq 1\}$ if $A\in (X,c_0) $ and $X$ is one of the spaces $\left(\bar{N}_{p}^{q} \right)_0$ or $\left(\bar{N}_{p}^{q} \right)$, then by Proposition \ref{P11} 
\begin{equation}\label{E4}
\|L_A\|_{\chi}=\chi(AF)=\lim_{s \rightarrow\infty} \left[\sup_{x\in F}\|(I-P_s)Ax\|\right]
\end{equation}
Again using Proposition \ref{P3} and Corollary \ref{C1} we have 
\begin{equation}\label{E5}
\|A\|^{s}=\sup_{x\in F}\|(I-P_s)Ax\|
\end{equation}
From \eqref{E4} and \eqref{E5}  we get 
\begin{align*}
\|L_A\|_{\chi}=\lim_{s \rightarrow\infty} \|A\|^{(s)}.
\end{align*}
Since every sequence $x=(x_k)_{k=0}^{\infty} \in c$ has a unique representation 
$$x=le+\sum_{k=0}^{\infty} (x_k-l)e^{(k)}~~~~~~~~~~~\text{where} ~~l\in \mathbb{C}~~\text{is such that } x-le\in c_0$$
We define $P_s:c\rightarrow c$ by  $P_s(x)=le+\sum_{k=0}^{s} (x_k-l)e^{(k)}$, $s=0,1,2,\ldots$.\\
Then $\|I-P_s\|=2$ and using \eqref{E5}  and Proposition \ref{P12} we get 
\begin{align*}
{1\over 2}\cdot \lim_{s\rightarrow\infty} \|A\|^{(s)}\leq \|L_A\|_{\chi}\leq \lim_{s\rightarrow\infty} \|A\|^{(s)}
\end{align*}
Finally we define $P_s:\ell_\infty\rightarrow \ell_\infty$ by $P_s(x)=(x_0,x_1,\ldots, x_s, 0,0\ldots)$, $x=(x_k)\in \ell_\infty$.\\
Clearly $ AF\subset P_s(AF)+(I-P_s)(AF)$\\
So using the properties of $\chi$ we get 
\begin{align*}
\chi(AF)&\leq  \chi[P_s(AF)]+\chi[(I-P_s)(AF)]\\
&=\chi[(I-P_s)(AF)] \\
&\leq \sup_{x\in F}\|(I-P_s)A(x)\|
\end{align*}
Hence by Proposition \ref{P3} and Corollary \ref{C1} we get\\
\begin{center}
$0\leq \|L_A\|_{\chi}\leq \lim_{s\rightarrow\infty} \|A\|^{(s)}$
\end{center}
\end{proof}
A direct corollary  of the above theorem is
\begin{corollary}
Consider the matrix $A$ as in Corollary \ref{C1}, and $X=\left(\bar{N}_{p}^{q} \right)_0$ or $X=\left(\bar{N}_{p}^{q} \right)$  
then if $A\in (X,c_0)$ or $A\in (X,c)$ we have 
\begin{align*}
L_A \text{  is compact if and only if } \lim_{s\rightarrow \infty}\|A\|^{(s)}=0 
\end{align*}
 Further, for $X=\left(\bar{N}_{p}^{q} \right)_0$ , $X=\left(\bar{N}_{p}^{q} \right)$ or $X=\left(\bar{N}_{p}^{q} \right)_\infty$, if $A\in (X,\ell_\infty)$ then we have   
\begin{align}\label{C111}
L_A \text{  is compact if } \lim_{s\rightarrow \infty}\|A\|^{(s)}=0 
\end{align}
\end{corollary}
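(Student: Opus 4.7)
The plan is to derive each assertion of the corollary as an immediate consequence of Theorem \ref{T4} together with the basic fact, quoted just before Proposition \ref{P11}, that an operator $L\in\mathcal{B}(X,Y)$ is compact if and only if $\|L\|_\chi=0$. No new analytic work is needed; the three cases arise by reading off when $\|L_A\|_\chi$ vanishes in each of the three estimates provided by Theorem \ref{T4}.

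First I would dispose of the case $A\in(X,c_0)$ for $X=(\bar{N}_p^q)_0$ or $X=(\bar{N}_p^q)$. Here Theorem \ref{T4} gives the equality $\|L_A\|_\chi=\lim_{s\to\infty}\|A\|^{(s)}$, so the equivalence $L_A \text{ compact}\iff\|L_A\|_\chi=0\iff\lim_{s\to\infty}\|A\|^{(s)}=0$ is tautological.

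Next, for $A\in(X,c)$ I would invoke the sandwich $\tfrac{1}{2}\lim_{s\to\infty}\|A\|^{(s)}\le\|L_A\|_\chi\le\lim_{s\to\infty}\|A\|^{(s)}$ from Theorem \ref{T4} and argue both implications. If $\lim_{s\to\infty}\|A\|^{(s)}=0$, the upper bound forces $\|L_A\|_\chi=0$, hence $L_A$ is compact. Conversely, if $L_A$ is compact, then $\|L_A\|_\chi=0$, and the lower bound, together with the non-negativity of $\|A\|^{(s)}$, forces $\lim_{s\to\infty}\|A\|^{(s)}=0$.

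For the final case $A\in(X,\ell_\infty)$ with $X$ any of the three spaces, Theorem \ref{T4} supplies only the one-sided estimate $0\le\|L_A\|_\chi\le\lim_{s\to\infty}\|A\|^{(s)}$. This yields only the sufficient direction: if $\lim_{s\to\infty}\|A\|^{(s)}=0$, then $\|L_A\|_\chi=0$ and $L_A$ is compact, which is precisely the one-sided statement \eqref{C111}. The ``main obstacle'' is purely conceptual rather than technical: one must simply observe that the asymmetry between the three conclusions of the corollary mirrors exactly the asymmetry of the corresponding estimates (equality, sandwich, upper bound) in Theorem \ref{T4}, which is why the $\ell_\infty$-case cannot be upgraded to an ``if and only if'' without an additional lower estimate for $\|L_A\|_\chi$.
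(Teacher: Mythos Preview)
Your proposal is correct and matches the paper's approach exactly: the paper presents this corollary with no proof at all, introducing it simply as ``A direct corollary of the above theorem,'' so the intended argument is precisely the one you give --- read off compactness from the three estimates of Theorem~\ref{T4} via the criterion $\|L_A\|_\chi=0$. Your exposition in fact fills in the obvious details the paper leaves implicit.
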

In \eqref{C111} it is possible for $L_A$ to be compact although $\lim_{s\rightarrow \infty}\|A\|^{(s)}\not=0$, that is the condition is only sufficient condition for $L_A$ to be compact.\\
For example, let the matrix $A$ be defined as $A_n=e^{(1)}~~~n=0,1,2,\ldots$ and the positive sequences $q_n=3^n~,~n=0,1,2,\ldots$ and $p_0=1, p_1=1, p_k=0~,~\forall~k=2,3,\ldots$ . \\
Then by \eqref{Co1i} we have 
$$\sup_{n,m}\left\{\sum_{k=0}^{m} R_{k}\left|\sum_{j=k}^{m} \left(-1\right)^{j-k}{H_{j-k}^{(p)}\over q_{j}}a_{nj}\right|\right\}=\sup_{m}\left( 2-{2\over3^{m}}\right)=2<\infty$$
Now by Corollary \ref{C1} we know $A\in \left( \left(\bar{N}_{p}^{q} \right)_\infty,\ell_\infty\right)$ . \\
But 
$$\|A\|^{(s)}=\sup_{n>s}\left[ 2-{2\over3^{m}} \right]=2-{1\over 2\cdot 3^{s}}~~~~~~\forall ~s$$
Which gives $\lim_{s\rightarrow \infty}\|A\|^{(s)} =2\not=0$.\\
Since $A(x)=x_1$ for all $x\in \left(\bar{N}_{p}^{q} \right)_\infty$, so $L_A$ is a compact operator. 

\bibliography{References}
\bibliographystyle{mmn}

\end{document}